\definecolor{darkblue}{rgb}{0,0,0.45}
\definecolor{darkred}{rgb}{0.6,0,0}
\definecolor{darkgreen}{rgb}{0.13,0.5,0}
\newcommand{\Dbar}{\overline D}
\newcommand{\fun}{\mathrm{fun}}
\newcommand{\sd}{\mathrm{sd}}
\newcommand{\sym}{\mathrm{\Delta}}
\newcommand{\N}{\mathbb{N}}
\newcommand{\R}{\mathbb{R}}
\newcommand{\E}{\mathbb{E}}
\newcommand{\vc}{\mathrm{vc}}
\newcommand{\cI}{\mathcal{I}}
\newcommand{\INT}{\mathrm{INT}}
\newcommand{\CA}{\mathrm{CA}}
\newcommand{\calG}{\mathcal{G}}
\newcommand{\CellSpace}{\rule{0pt}{14pt} \rule[-9pt]{0pt}{3pt}} 
\newcommand{\SmallCellSpace}{\rule{0pt}{12pt}} 
\theoremstyle{plain}
\newtheorem{theorem}{Theorem}
\newtheorem{lemma}[theorem]{Lemma}
\newtheorem{corollary}[theorem]{Corollary}
\newtheorem{proposition}[theorem]{Proposition}
\newtheorem{question}{Question}
\theoremstyle{definition}
\newtheorem{observation}[theorem]{Observation}
\newtheorem{remark}[theorem]{Remark}
\newtheorem*{remark*}{Remark}
\begin{document}
\title{Bounds on Functionality and Symmetric Difference -- Two Intriguing Graph Parameters\footnote{The conference version of this paper was published in the proceedings of The 49th International Workshop on Graph-Theoretic Concepts in Computer Science, WG 2023~\cite{WG}.}}

%
%
\author[1]{Pavel Dvořák\thanks{Supported by Czech Science Foundation GAČR grant 22-14872O.}}
\author[2]{Lukáš Folwarczný\thanks{Supported by Czech Science Foundation GAČR grant 19-27871X.}}
\author[3]{Michal Opler\thanks{Supported by Czech Science Foundation GAČR grant 24-12046S.}}
\author[2]{Pavel Pudlák\thanks{Supported by Czech Science Foundation GAČR grant 25-16311S.}}
\author[1]{Robert Šámal\thanks{Partially supported by grant 25-16627S of the Czech Science Foundation. 
  This project has received funding from the European Research Council (ERC) under the European Union’s Horizon 2020
  research and innovation programme (grant agreement No 810115).}}
\author[1]{Tung Anh Vu\thanks{Partially supported by Charles Univ. project UNCE 24/SCI/008, by the ERC-CZ project LL2406 of the Ministry of Education of Czech Republic, and by project 24-10306S of Czech Science Foundation GAČR.}}
%
\affil[1]{Faculty of Mathematics and Physics, Charles University, Prague, Czech Republic}
\affil[2]{Institute of Mathematics, Czech Academy of Sciences, Prague, Czech Republic}
\affil[3]{Faculty of Information Technology, Czech Technical University, Prague, Czech Republic}

\date{}
\maketitle              
\begin{abstract}
\emph{Functionality} ($\fun$) is a graph parameter that generalizes graph degeneracy defined by Alecu et al. [JCTB, 2021].
They research the relation of functionality to many other graphs
parameters (tree-width, clique-width, VC-dimension, etc.).
Extending their research, we completely characterize the functionality of random graph~$G(n,p)$ for all possible $p$.
We provide matching (up to a constant factor) lower and upper bound for a large range of $p$.
If $p \leq 1/{n^{\frac{1}{2} - \varepsilon}}$ or $p \geq 1 - 1/{n^{\frac{1}{2} - \varepsilon}}$ where $\varepsilon$ is a small constant, we provide lower and upper bounds that differ in a $O(\ln^2 n)$ factor, moreover the gap is at most $O(\ln n)$ if $p \leq 1/\sqrt{n}$ or $p \geq 1 - 1/\sqrt{n}$.
It follows from our bounds for $G(n,p)$, that the maximum functionality (roughly $\sqrt{n}$) is achieved for $p \approx 1/\sqrt{n}$.
We complement this by showing that every graph~$G$ on~$n$ vertices have $\fun(G) \le O(\sqrt{ n \ln n})$ and we give a nearly matching $\Omega(\sqrt{n})$-lower bound provided by incident graphs of projective planes.
Previously known lower bounds for functionality were only logarithmic in the number of vertices.

Further, we study a related graph parameter \emph{symmetric difference} ($\sd$),
the minimum of $|N(u) \sym N(v)|$ over all pairs of vertices of the ``worst possible''
induced subgraph.
It was observed by Alecu et al. that $\fun(G) \le \sd(G)+1$ for every graph~$G$.
They asked whether the functionality of interval graphs is bounded.
Recently, Dallard et al. [RiM, 2024]  answered this positively and they constructed an interval graph $G$ with $\sd(G) = \Theta(\sqrt[4]{n})$ (even though they did not mention the explicit bound), i.e., they separate the functionality and symmetric difference of interval graphs.
We show that $\sd$ of interval graphs is at most $O(\sqrt[3]{n})$ and we provide a different example of an interval graph $G$ with $\sd(G) = \Theta(\sqrt[4]{n})$.
Further, we show that $\sd$ of circular arc graphs is $\Theta(\sqrt{n})$.

\end{abstract}

\section{Introduction}
\label{sec:Intro}
Let $G = (V,E)$ be a graph and $v \in V$ be a vertex.
An adjacency matrix $A_G$ of~$G$ is a $0$-$1$ matrix such that its rows and columns are indexed by vertices of~$G$ and $A[u,v] = 1$ if and only if $u$ and $v$ are connected by an edge.
Now, we define the functionality and symmetric difference of a graph -- two principal notions of this paper -- as introduced by Alecu et al.~\cite{functionality}, 
and implicitly also by Atminas et al.~\cite{implicit_representation}. 

A vertex $v$ of a graph $G = (V,E)$ is a \emph{function} of vertices $u_1,\dots,u_k \in V$ (different from $v$) if there exists a boolean function $f$ of $k$ variables such that for any vertex 
$w \in V \setminus \{v,u_1,\dots,u_k\}$ it holds that $A_G[v,w] = f\bigl(A_G[u_1,w],\dots,A_G[u_k,w]\bigr)$.
Informally, we can determine if $v$ and $w$ are connected from the adjacencies of $w$ with the $u_i$'s.
The \emph{functionality} $\fun_G(v)$ of a vertex $v$ in $G$ is the minimum $k$ such that $v$ is a function of $k$ vertices of $G$.
We drop the subscript and write just $\fun(v)$ if the graph $G$ is clear from the context.
Then, the \emph{functionality} $\fun(G)$ of a graph $G$ is defined as 
\[
\fun(G) = \max_{H \subseteq G} \min_{v \in V(H)} \fun_H(v),
\]
where the maximum is taken over all induced subgraphs $H$ of $G$.

It is observed in~\cite{implicit_representation} that if $\fun(G) \le k$ then we can encode $G$ using
$n (2^k + (k+1) \log n)$ bits, where $n$ is the number of vertices of $G$. 
Thus, if every graph $G$ in some graph class~$\calG$ has 
bounded functionality then $\calG$ contains at most $2^{O(n \log n)}$ 
graphs on $n$ vertices. 
Such classes are said to be of factorial growth~\cite{factorial_growth} and include diverse classes of practical importance (interval graphs, line graphs, forests, planar graphs, and more generally all proper minor-closed classes). 
Thus, Alecu et al.~\cite{functionality} introduce functionality as a tool to study graph classes of factorial growth, 
and the related Implicit graph conjecture (although this conjecture was recently 
disproved \cite{hatami_igc_false}). This was also our original motivation. 
Moreover, functionality is a natural generalization of the graph degeneracy, as the degree of a vertex $v$ is a trivial upper bound for the functionality of $v$. 
Thus, it deserves a study for its own sake. 

Alecu et al.~\cite{functionality} research the relation of functionality to many other graph 
parameters: in particular they provide a linear upper bound in terms of clique-width 
and a lower bound in terms of some function of VC-dimension. They also give a lower bound for 
the functionality of the hypercube that is linear in dimension (i.e., logarithmic in the number of vertices). 

Another parameter related to functionality is the symmetric difference.
Given two vertices $u, v$ of $G$, let $\sd_G(u,v)$ (or just $\sd(u,v)$ when the graph is clear from the context) be the number of vertices different from $u$ and $v$ that are adjacent to exactly one of $u$ and $v$.
The \emph{symmetric difference} $\sd(G)$ of a graph $G$ is defined as
\[
\max_{H \subseteq G} \min_{u,v \in V(H)} \sd_H(u, v),
\]
where the maximum is again taken over all induced subgraphs.
The set of neighbors of $v$ in $G$ is denoted by $N_G(v)$, we omit the subscript if the graph is clear from the context.
Thus, we may view $\sd(u,v)$ as the size of the set $(N(u) \sym N(v)) \setminus \{u,v\}$, which explains the term ``symmetric difference''. 
It is noted by Alecu et al.~\cite{functionality} that $\fun(G) \leq \sd(G) + 1$.
However,  there is no lower bound in terms of $\sd$ as there are graphs of bounded functionality and polynomial symmetric difference -- for example the interval graphs.
This was shown by Theorem~5.2 of Dallard et al.~\cite{box_graphs} and by our Theorem~\ref{thm:IntervalLB}.


%

\subsection{Our Results}

In this paper, we show several lower and upper bounds for functionality and symmetric difference of various graph classes.
Our main technical result is characterizing the functionality of random graphs $G(n,p)$ for all possible $p$.
Note that $\fun(G) = \fun(\bar{G})$, where $\bar{G}$ is the complement of $G$.
Since $G(n,1-p)$ is the complement of $G(n, p)$, we state our bounds only for $p \leq \frac{1}{2}$.
Our lower and upper bound match up to a constant factor for a large range of $p$.
Only for small values of $p$ our lower and upper bound differ by a $\ln n$ or $\ln^2 n$ factor.
The bounds are presented in the following table, where $0 < \varepsilon < \frac{1}{2}$ is a small constant, and $O(\cdot)$ with $\Omega(\cdot)$ hide universal constants even independent on $\varepsilon$.
All bounds hold for $G(n,p)$ almost surely, i.e., with probability at least $1 - o(1)$.

\begin{center}
\begin{tabular}{c|c|c}
$p$ & A. S. Lower Bound  (Thm.~\ref{thm:RandomLowerBoundFinal}) & A. S. Upper Bound (Cor.~\ref{cor:RandomUpperBound})  \\
\hline

$\left[ \frac{1}{n^{\frac{1}{2} - \varepsilon}} , \frac{1}{2} \right]$ & $\Omega\left(\frac{\varepsilon}{p} \ln n \right)$ & $O\left(\frac{1}{p} \ln n \right)$ \CellSpace \\
\hline
$\left[ \frac{1}{\sqrt{n}}, \frac{1}{n^{\frac{1}{2} - \varepsilon}} \right]$ & $\Omega\left(\frac{1}{p\ln n} \right)$  & $O\left(\frac{1}{p} \ln n \right)$ \CellSpace \\
\hline
$\left[ \frac{6 \ln n}{n}, \frac{1}{\sqrt{n}}\right]$ & $\Omega\left(\frac{pn}{\ln n} \right)$   & $ 2pn$ \CellSpace \\
\hline
$\left(0,\frac{6 \ln n}{n}\right]$ & --- & $ 6e^2 \ln n$ \SmallCellSpace \\

\end{tabular}
\end{center}

As far as we know, there were only logarithmic lower bounds for functionality~\cite{implicit_representation,functionality,box_graphs}.
Thus, it would have been possible that the functionality is at most logarithmic (similarly to the VC-dimension~\cite{vc_dimension}).
However, it is clear from the table that the maximum functionality of $G(n,p)$ is achieved when $p \approx \frac{1}{\sqrt{n}}$, for such $p$ holds that $\fun(G(n,p))$ is roughly $\sqrt{n}$.
Further, we give an explicit construction of graphs of functionality $\Theta(\sqrt{n})$.
In particular, we show that the functionality of the incidence graph of a finite projective plane of order $k$ is exactly $k + 1$, i.e., roughly $\sqrt{n}$.
We complement this result with an almost matching upper bound that functionality of any graph is at most $O(\sqrt{n \ln n})$.
Thus, the maximum functionality of random graphs (with the right value of $p$) and all graphs is roughly the same.

Further, we study the symmetric difference parameter for two classes of intersection graphs.
The intersection graph of a family of sets ${\cal F} = \{S_1,\dots,S_n\}$ is a graph $G = (V,E)$ where $V = \{v_1,\dots,v_n\}$ and two vertices $v_i$ and $v_j$ are connected if and only if the corresponding sets $S_i$ and $S_j$ of ${\cal F}$ intersect.
An interval graph is an intersection graph of $n$ intervals on a real line.
A circular arc graph is an intersection graph of $n$ arcs of a circle.
We let $\INT_n$ denote the family of all intersection graphs with $n$ vertices, $\INT$ the family of all interval graphs.
In the same vein, we define $\CA_n$ and $\CA$ for circular arc graphs.

We show that the symmetric difference of circular arc graphs is $\Theta(\sqrt{n})$, i.e., we prove that any circular arc graph has symmetric difference at most $O(\sqrt{n})$ and we present a circular arc graph of symmetric difference $\Omega(\sqrt{n})$.
Recently, it was shown that interval graphs have bounded functionality and unbounded symmetric difference~\cite{box_graphs}.
Even though it was not explicitly mentioned, the construction given by Dallard et al.~\cite{box_graphs} leads to the lower bound $\Omega(\sqrt[4]{n})$ for the symmetric difference of interval graphs.
We independently came up with a different construction leading to the same lower bound, however, the analysis of our construction is simpler than the one from the previous work.
For interval graphs, we also present the upper bound $O(\sqrt[3]{n})$ for symmetric difference.
Thus, we are leaving a gap between the lower and upper bound.
However, we show the symmetric difference of interval graphs is polynomial and strictly smaller than symmetric difference of circular arc graphs.

\paragraph*{Technique for the Random Graphs}
Our main technical contribution is the bounds for the functionality of the random graphs.
The idea of the lower bound is the following.
Say, we want to prove that the functionality of $G(n,p)$ is larger than $k = \frac{c_1}{p}\cdot\ln n$ with high probability, where $c_1$ is sufficiently small constant.
First, we fix $k + 1$ vertices $v, u_1,\dots,u_k$ and we bound the probability that $v$ is a function of $u_1,\dots,u_k$.
Let $B$ be the set of vertices that are not connected to any of the vertices $u_1,\dots,u_k$.
The expected size of $B$ is $n \cdot (1 - p)^k \geq \sqrt{n}$ if $c_1$ is small enough.
If $v$ were a function of $u_1,\dots,u_k$ then all vertices of $B$ would be connected to $v$ or to none of them, this would happen only with the probability at most $2\cdot(1-p)^{\sqrt{n}}$ (as we suppose that $p \leq \frac{1}{2}$).
Thus, if $c_1$ is sufficiently small and $p$ is roughly larger than $\ln n / \sqrt[4]{n}$ then $v$ is a function of $u_1,\dots,u_k$ only with small probability.
Moreover, this probability is so small that we can use the union bound over all $(k+1)$-tuples $v,u_1,\dots,u_k$ to prove that there is a vertex $v$ of functionality $k$ in $G(n,p)$ only with small probability.
Since $\fun(G)$ is the maximum of $\min_v \fun_H(v)$ over all induced subgraphs $H$ of~$G$, we have a lower bound for the functionality of random graph that holds with high probability.
These ideas with more careful calculations yield the following lower bound for the functionality of random graphs.

\begin{restatable}{theorem}{RandomLowerBoundFinal}
\label{thm:RandomLowerBoundFinal}
Let $\varepsilon > 0$ be an arbtirarily small constant. Then, with a probability at least $1 - o(1)$, it holds that
\[
\fun\bigl(G(n,p)\bigr) \geq
\begin{cases}
\Omega\left(\frac{\varepsilon}{p} \ln n \right) & \text{if $\frac{1}{n^{\frac{1}{2} - \varepsilon}} \leq p \leq \frac{1}{2}$}, \\
\Omega\left(\frac{1}{p \ln n}\right) & \text{if $\frac{1}{\sqrt{n}} \leq p \leq \frac{1}{n^{\frac{1}{2} - \varepsilon}}$}, \\
\Omega\left(\frac{pn}{\ln n}\right) & \text{if $\frac{\ln n}{n} \leq p \leq \frac{1}{\sqrt{n}}$,}
\end{cases}
\]
where $\Omega(\cdot)$'s hide constants independent on $\varepsilon$.
\end{restatable}

For the upper bound, we can try to use a similar idea.
Again, let $k = \frac{c_2}{p}\cdot \ln n$, now with a large enough constant $c_2$.
We fix $k$ vertices $D = \{u_1,\dots,u_k\}$.
If $c_2$ is large enough, then with high probability (roughly $1 - \frac{1}{n^c}$ where the constant $c$ depends on $c_2$) each vertex not in $D$ has a unique neighborhood in $D$.
We call such set a \emph{distinguishing set}.
Formally, given a graph $G$, a set $D \subseteq V(G)$ is distinguishing if there are no vertices $u, v \in V(G) \setminus D$ such that
$N(u) \cap D = N(v) \cap D$.
Thus, each vertex not in $D$ is actually a function of $D$ as we can uniquely determine each vertex not in $D$ by its neighborhood in $D$.
Therefore, if each induced subgraph $H$ of $G$ has a distinguishing set $D \subseteq V(H)$ of size at most $k$, then $\fun(G) \leq k$.

However, this probability bound ($\sim 1 - \frac{1}{n^c}$) is not large enough to use it in the union bound over all induced subgraphs of $G$ to prove that there is a distinguishing set in each induced subgraph of $G$ and thus, $\fun(G) \leq k$.
To improve the bound for the probability, we use the method of alterations.
We will prove that with a very high probability, there are two disjoint sets $D_G$ and $E_G$ of vertices of $G = G(n,p)$ of size at most $k$, such that each vertex not in $D_G \cup E_G$ has a unique neighborhood in $D_G$.
Thus, each vertex of $G$ not in $D_G \cup E_G$ is a function of at most $2k$ vertices.
The probability of this event is so high that we can use the union bound to prove that every induced subgraph $H$ of $G(n,p)$ has such a pair of sets $E_H,D_H \subseteq V(H)$.
Therefore, we can conclude that with high probability $\fun(G) \leq 2k$.
Formally, we will prove the following theorem about distinguishing sets of random graphs.

\begin{restatable}{theorem}{RandomUpperBound}
\label{thm:RandomUpperBound}
  Let $\frac{1}{\sqrt{n}} \leq p \leq \frac{1}{2}$ and $c$ be a sufficiently large constant.
  Then, with probability at least $1 - o(1)$ every induced subgraph of~$G(n,p)$ has a distinguishing set of size at most~$\frac{c}{p}\cdot \ln n$.
\end{restatable}

For the remaining range of $p$, we use an upper bound for the maximum degree of $G(n,p)$.
It is clear that for any graph $G$ holds that $\fun(G)$ is smaller than the maximum degree $\Delta(G)$ of~$G$.
By standard technique, one can easily verify the following bounds for the maximum degree of random graphs, see for example the book by Frieze and Karoński~\cite{max_degree_random}.

\begin{lemma}
\label{lem:RandomMaxDegree}
 With probability at least $1 - o(1)$, it holds that
 \[
\Delta\bigl(G(n,p)\bigr) \leq
\begin{cases}
2pn & \text{if $\frac{6 \ln n}{n} < p \leq \frac{1}{\sqrt{n}}$,} \\
6e^2 \ln n & \text{if $0 < p \leq \frac{6 \ln n}{n}$.}
\end{cases}
\]
\end{lemma}

By discussion above, we have the following corollary of Theorem~\ref{thm:RandomUpperBound} and Lemma~\ref{lem:RandomMaxDegree} that summarizes our upper bound of the functionality of random graphs.

\begin{corollary}
\label{cor:RandomUpperBound}
 With probability at least $1 - o(1)$, it holds that
\[
\fun\bigl(G(n,p)\bigr) \leq
\begin{cases}
O\left(\frac{1}{p} \ln n \right) & \text{if $\frac{1}{\sqrt{n}} < p \leq \frac{1}{2}$,} \\
2pn & \text{if $\frac{6 \ln n}{n} < p \leq \frac{1}{\sqrt{n}}$,} \\
6e^2 \ln n & \text{if $0 < p \leq \frac{6 \ln n}{n}$.}
\end{cases}
\]
\end{corollary}

\section{Functionality}
\label{sec:Fun}
\subsection{Finite Projective Planes}

Recall that a finite projective plane is a pair $(X, \mathcal{L})$, where $X$ is a finite set and $\mathcal{L} \subseteq 2^X$, satisfying the following axioms~\cite{kapitoly_z_dm}:
\begin{enumerate}
  \item
    For every~\(p \neq q \in X\), there is exactly one subset of $X$ in~\(\mathcal{L}\) containing~\(p\) and~\(q\).
  \item
    For every~\(L \neq M \in \mathcal{L}\), we have~\(|L \cap M| = 1\).
  \item
    There exists a subset~\(Y \subseteq X\) of size~4 such that~\(|L \cap Y| \le 2\) for every~\(L \in \mathcal{L}\).
\end{enumerate}
Elements of $X$ are called points and elements of $\mathcal{L}$ are called lines.
We note that for every $k$ which is a power of a prime, a finite projective plane with the following properties can be constructed.
Each line contains exactly $k + 1$ points.
Each point is incident to exactly $k + 1$ lines.
The total number of points is $k^2 + k + 1$.
The total number of lines is also $k^2 + k + 1$.
The number $k$ is called the order of the finite projective plane.

The incidence graph of a finite projective plane is a bipartite graph with one part $X$ and
the second part $\mathcal{L}$. In this graph, $x \in X$ is adjacent to $\ell \in \mathcal{L}$ iff $x$ is incident to $\ell$.
The following theorem shows that the incidence graph of a finite projective plane has functionality approximately $\sqrt{n}$,
where $n$ is the number of vertices.
Alecu et al.~\cite{functionality} have shown that there exists a function~\(f\) such that for every graph~\(G\) we have~\(\vc(G) \le f(\fun(G))\).
Our result complements this inequality by showing that the functionality of a graph cannot be upper bounded by its VC-dimension as it is known that the VC-dimension of a finite projective plane (and subsequently of its incidence graph) of any order is~2~\cite{alon_haussler_welzl}.

\begin{theorem}
\label{thm:Planes}
Consider a finite projective plane of order $k$ and its incidence graph $G$. 
Then, $\fun(G) = k + 1$.
Moreover, for any proper induced subgraph $G' \subset G$ we have $\fun(G') \leq k$.
\end{theorem}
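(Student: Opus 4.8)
The plan is to establish the theorem in two parts: first the lower bound $\fun(G) \ge k+1$ witnessed by the whole graph $G$ itself, and then the upper bound $\fun(G') \le k$ for every \emph{proper} induced subgraph, which also yields $\fun(G) \le k+1$ (since when we delete at least one vertex we are in a proper subgraph, and the maximum in the definition of $\fun$ is attained either by $G$ itself or by a proper induced subgraph). Throughout I would exploit the defining incidence structure: two points lie on a unique common line, two lines meet in a unique common point, and the graph is bipartite with parts $X$ and $\mathcal L$. The key quantitative facts are that every vertex has degree exactly $k+1$ and that $|N(u) \cap N(v)| = 1$ whenever $u,v$ are distinct vertices on the same side.

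For the lower bound, I would show that in $G$ every vertex $v$ has $\fun_G(v) \ge k+1$, so that $\min_{v} \fun_G(v) \ge k+1$ and hence $\fun(G) \ge k+1$. Fix a vertex $v$, say a point, and suppose it is a function of some set $S = \{u_1,\dots,u_k\}$ of $k$ vertices via a boolean function $f$. The point is that the restriction of the adjacency vector of $v$ to the $\le k$ chosen vertices cannot determine $v$'s adjacency to all remaining vertices, because $v$ has many neighbours that are pairwise indistinguishable from the viewpoint of $S$. Concretely, among the $k+1$ lines through $v$, and the lines not through $v$, the ``profile'' that $f$ sees is the vector $(A[v,u_1],\dots,A[v,u_k])$, which is a fixed single vector once $v$ and $S$ are fixed; so $f$ outputs a single bit and forces $A[v,w]$ to be constant over all $w \notin S \cup \{v\}$. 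This is impossible since $v$ is adjacent to exactly $k+1$ of the roughly $2(k^2+k+1)$ vertices, so it has both neighbours and non-neighbours outside any $k$-set. I would make this robust by showing that no matter which $k$ vertices are chosen, there remain two vertices $w, w'$ outside $S$ with $A[v,w] \ne A[v,w']$ but identical adjacency profiles to $S$, contradicting that $f$ is a function; the counting uses that $|S| = k$ is too small to separate the $k+1$ neighbours from the non-neighbours.

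For the upper bound, I would argue that deleting any single vertex already drops functionality to $k$, and this propagates to every proper induced subgraph. The cleanest route: in any proper induced subgraph $G'$, I exhibit \emph{one} vertex $v$ that is a function of at most $k$ others. Suppose we delete a point $x_0$ from the point side. Then for any surviving line $\ell$, the deletion of $x_0$ breaks the symmetry: a well-chosen vertex $v$ (e.g.\ a line $\ell$ that passed through $x_0$) now has its adjacency to every other vertex $w$ determined by adjacency to the $k$ remaining points of $\ell$ together with the incidence relations, because two lines meet in a unique point and that point, if it is one of $\ell$'s surviving $k$ points, certifies adjacency. The boolean function $f$ is then ``OR of the $k$ coordinates'' or a similarly simple symmetric function reflecting ``$w$ is adjacent to $v$ iff $w$ shares one of $v$'s recorded points.'' I would verify that the unique-intersection axioms make this $f$ well-defined on all remaining vertices.

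The main obstacle I anticipate is the upper-bound direction, specifically pinning down the right vertex $v$ and the right $k$-element set $S$ so that a single boolean function $f$ correctly predicts $A[v,w]$ for \emph{all} remaining $w$, using only the missing vertex to save one coordinate. The delicate point is that incidence in a projective plane is not simply determined by a local neighbourhood; I must use the uniqueness of intersection points (axiom~2) to argue that knowing $v$'s adjacency to the $k$ surviving points/lines of a suitable pencil pins down adjacency everywhere. Getting the bipartite bookkeeping exactly right — tracking whether the deleted vertex lies on the point side or the line side, and confirming the saved coordinate genuinely corresponds to the deleted vertex — is where the care is needed; the lower bound, by contrast, is a comparatively direct counting argument.
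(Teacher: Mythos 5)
Your overall architecture (lower bound $\fun_G(v) \ge k+1$ for every vertex of $G$, upper bound $\le k$ for all proper induced subgraphs) matches the paper's, but your upper-bound argument --- the part you flag as the main difficulty --- contains a concrete error. In the bipartite incidence graph two lines are \emph{never} adjacent, so your proposed function ``$w$ is adjacent to $\ell$ iff $w$ shares one of $\ell$'s recorded points,'' i.e.\ the OR of the $k$ coordinates, mispredicts: for a line $w \neq \ell$ passing through one of the $k$ surviving points of $\ell$, the OR outputs $1$ while $A[\ell,w] = 0$. You have conflated the incidence graph with the intersection graph of the lines. The correct function is far simpler: if $v$ has degree at most $k$ in an induced subgraph $H$ and $S = N_H(v)$, then every $w \notin S \cup \{v\}$ is a non-neighbor of $v$, so the \emph{constant-zero} function witnesses $\fun_H(v) \le \deg_H(v)$. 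Since $G$ is connected and $(k+1)$-regular, every induced subgraph of a proper induced subgraph $G'$ is itself a proper induced subgraph of $G$ and hence contains a vertex of degree at most $k$; this yields $\fun(G') \le k$, and $(k+1)$-regularity yields $\fun(G) \le k+1$, in two lines. No pencil construction, no unique-intersection reasoning, and no point-side/line-side bookkeeping is needed --- you have the hard and easy directions of the theorem reversed.

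The lower bound, which you defer as ``a comparatively direct counting argument,'' is the actual content of the theorem, and your sketch leaves the decisive step unproven. Your target statement is the right one (and is exactly the paper's): for every vertex $v$ and every set $S$ of at most $k$ vertices, exhibit $u, w \notin S$ with identical adjacency profiles to $S$ but $A[v,u] \neq A[v,w]$. However, ``$|S| = k$ is too small to separate the $k+1$ neighbours from the non-neighbours'' is not by itself an argument, and the count does not run over the $k+1$ neighbors alone: one needs a neighbor $u$ and a non-neighbor $w$ both with all-zero profile to $S$. The paper's execution: by point--line duality take $v = \ell$ a line; if $S$ consists of $a$ points and $b$ lines with $a + b \le k$, then since each line of $S$ meets $\ell$ in exactly one point, at most $a + b \le k$ of the $k+1$ points of $\ell$ are blocked, giving $u$. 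For $w$ a genuine case split is required: if $b = k$, any line outside $S \cup \{\ell\}$ works (lines are pairwise non-adjacent); if $b \le k-1$, the points incident to lines of $\{\ell\} \cup S$ or listed in $S$ number at most $(b+1)(k+1) - 1 + a \le k^2 + k < k^2 + k + 1$, so an uncovered point survives. None of this is visible in your sketch. Finally, your first reading of the definition --- evaluating $f$ on the fixed vector $(A[v,u_1],\dots,A[v,u_k])$ --- would make functionality trivially near-maximal for every graph; your ``robust'' reformulation implicitly switches to the correct definition, where $f$ is evaluated on $(A[w,u_1],\dots,A[w,u_k])$, and you should state that correction explicitly (the paper's own wording of the definition contains the same slip).
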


\begin{proof}
First, note that $G$ is a $(k + 1)$-regular graph, thus $\fun(G) \leq k + 1$.
Since $G$ is connected, every proper subgraph $G' \subset G$ contains a vertex of degree at most $k$.
Thus, $\fun(G') \leq k$.

It remains to prove that the functionality of every vertex $v \in V(G)$ is at least $k + 1$.
Let $\ell$ be a line of the projective plane. 
Because of the point-line duality of finite projective planes~\cite{kapitoly_z_dm}, it is enough to show that the functionality of $\ell$ is at least $k + 1$. 
Let $S = \{p_1, \dots, p_a, \ell_1,\dots, \ell_b\}$ be a set of points $p_i$ and lines $\ell_j$ (distinct from $\ell$) such that $|S| = a + b \leq k$. 
We will prove that there exist vertices $u$ and $w$ satisfying:
\begin{enumerate}
 \item The vertices $u$ and $w$ are not in $S$ and they are not adjacent to any vertices in $S$. They are also different from $\ell$.
 \item The vertex $u$ is adjacent to $\ell$, i.e., $u$ is a point incident to $\ell$.
 \item The vertex $w$ is not adjacent to $\ell$, i.e., $w$ is a point not incident to $\ell$ or it is a line.
\end{enumerate}
Once we prove the existence of these two vertices, we are done as their existence implies that $\ell$ is not a function of $S$.

There are $k + 1$ points on each line and every two lines intersect in one point.
It follows that there is a point $q$ on $\ell$ which is distinct from each $p_i$ and it is not incident with any $\ell_i$. 
Thus, the vertex $q$ is not adjacent to any vertex in $S$ and it is adjacent to $\ell$.
We set $u = q$.

Let $L = \{\ell, \ell_1,\dots,\ell_b\}$.
We will prove the existence of the vertex $w$ by considering two cases.
The first case is $b = k$, i.e., the set $S$ consists of $k$ lines and no points.
We may consider any line $\ell' \not \in L$ as the vertex $w$.
The vertex $w$ is not adjacent to any vertex in $S$ as it contains only lines and it is not adjacent to the line $\ell$ either.
Thus, the vertex $w$ satisfies the sought properties.

The second case is $b \leq k - 1$.
Let $P$ be the set of points  $p_1, \ldots, p_a$ together with all the points that are incident to some line from $L$.
We will show that $|P| \leq k^2 + k$, which implies the existence of a vertex $w \in V(G) \setminus P$ satisfying the sought properties.
First, note that if $b = 0$ (i.e., $S$ contains only points), then $|P| \leq k + 1 + a \leq 2k + 1$.
Next, we suppose that $b \geq 1$ which means that $L$ contains at least two lines.
Recall that each pair of lines intersects at a point.
Thus, there are at most $(b+1)(k+1) - 1$ points incident to the lines in $L$ (the $-1$ comes from the fact that we have at least two lines).
Moreover, we have $a$ points $p_1, \dots, p_a$. 
Thus, $|P| \leq (b+1)(k+1) - 1 + a$.
Since $a + b \leq k$, we have $a \leq k - b$.
Therefore,
\[
 |P| \leq (b+1)(k+1) - 1 + k - b = bk + 2k \leq k^2 + k.
\]
This concludes the second case and also the whole proof.
\end{proof}

\subsection{Upper Bound for General Graphs}

\begin{theorem}
\label{thm:GeneralFunUB}
  If $G$ is a graph with $n$ vertices, then $\fun(G) \le \sqrt{c\cdot n \ln n}$ for any $c>3$, 
  if $n$ is big enough. 
\end{theorem}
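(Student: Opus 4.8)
The plan is to reduce the statement to a single self-contained claim: every graph $H$ on $m \le n$ vertices contains a vertex of functionality at most $k := \sqrt{cn\log n}$. Since every induced subgraph of $G$ has at most $n$ vertices and $\sqrt{cx\log x}$ is increasing, this immediately bounds $\min_{v} \fun_H(v)$ by $k$ for every $H \subseteq G$, and hence $\fun(G) \le k$. To prove the claim I would keep two elementary tools at hand. The first is the symmetric-difference bound already recorded by Alecu et al.: for any two vertices $u \ne u'$ one has $\fun_H(u) \le \sd_H(u,u') + 1$, witnessed by the set $U = \bigl((N(u)\sym N(u'))\setminus\{u,u'\}\bigr)\cup\{u'\}$, on which the adjacency of $u$ to any outside vertex $w$ equals the $u'$-coordinate of the trace of $w$ on $U$. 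The second is a separation reformulation of functionality: $v$ is a function of a set $U$ as soon as no neighbour $w$ and non-neighbour $w'$ of $v$ (both outside $U \cup \{v\}$) have the same trace on $U$, i.e. as soon as $U$ meets $(N(w)\sym N(w'))\setminus\{w,w'\}$ for every such \emph{crossing} pair.

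The heart of the argument is a dichotomy on the minimum symmetric difference in $H$. If some pair $u \ne u'$ satisfies $\sd_H(u,u') \le k-1$, the first tool gives $\fun_H(u) \le k$ and we are done. Otherwise every pair satisfies $\sd_H(u,u') \ge k$, and I would fix an arbitrary vertex $v$ and build a separating set $U$ by the probabilistic method: include each vertex of $H$ in $U$ independently with probability $p = (1+o(1))\,\tfrac{2\log n}{k}$. For a fixed crossing pair the set $U$ must hit, namely $(N(w)\sym N(w'))\setminus\{w,w'\}$, now has size at least $k$, so it is missed with probability at most $(1-p)^k \le e^{-pk} \le n^{-2-\Omega(1)}$; a union bound over the fewer than $n^2$ crossing pairs shows that $U$ separates with probability tending to $1$. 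At the same time $\E|U| \le pn = (1+o(1))\tfrac{2}{c}\,k < k$ because $c > 2$, and $|U|$ is a sum of independent indicators with mean of order $\sqrt{n\log n} \to \infty$, so a Chernoff bound gives $|U| \le k$ with probability tending to $1$. Hence for $n$ large some outcome simultaneously separates all crossing pairs and has $|U| \le k$, and the second tool yields $\fun_H(v) \le |U| \le k$.

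I expect the conceptual obstacle to be recognising that neither tool works alone: on a random-graph-like $H$ all pairwise symmetric differences are close to $m/2$, so the first tool is useless, whereas if some symmetric difference is tiny the random set cannot be made to hit it cheaply. The point is that for a fixed threshold exactly one of the two situations must occur, and balancing this threshold at $k = \sqrt{cn\log n}$ makes both branches cost at most $k$. The quantity governing the threshold—and hence the requirement $c>2$ rather than $c=2$—comes precisely from the union bound over the $\le n^2$ crossing pairs, which forces $pk \gtrsim 2\log n$. The only routine care needed is the concentration of $|U|$ and the harmless $O(1)$ discrepancies introduced by deleting $w,w'$ from the symmetric differences.
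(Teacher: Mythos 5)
Your proposal is correct and follows essentially the same route as the paper's own proof: a dichotomy on the minimum symmetric difference (handled via $\fun(u) \le \sd(u,u')+1$ in one branch) and, in the other branch, a random separating set analysed by a union bound over the at most $n^2$ crossing pairs, with the threshold balanced at $\sqrt{cn\log n}$. The only differences are cosmetic --- you sample with a slightly smaller probability and control $|U|$ by a Chernoff bound with slack factor $2/c<1$, whereas the paper takes $p=d/n$ and bounds $|S|$ by Markov; if anything, your variant is marginally more robust for $c$ close to~$2$.
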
 

\begin{proof} 
 We show that there is $v \in V(G)$ such that $\fun_G(v) \le d(n) =  \sqrt{c \cdot n \ln n}$.
 As $d(n)$ is increasing, this suffices to show the existence of such vertex also in all 
 induced subgraphs of~$G$. We will write $d = d(n)$. 

 Let $D(u,v) = (N(u) \sym N(v)) \setminus \{u,v\}$, i.e., $|D(u,v)| = \sd(u,v)$.
 First, suppose that $\sd(u,v) < d$ for some $u$, $v$ of $G$.
 Then, the vertex~$v$ is a function of the set $D(u,v) \cup \{u\}$, i.e., of at most $d$ vertices.
 Next, suppose all sets $D(u,v)$ have at least $d$ vertices. 
 In this case, we choose an arbitrary vertex~$v \in V(G)$. 
 We also choose a random set $S \subseteq V(G)$ by independently putting each vertex of~$G$ different from $v$ to~$S$ with probability~$p = d/n$. 
 Suppose $v$ is not a function of $S$.
 Then, there exists $u_1 \in N(v) \setminus S$ and $u_2 \notin N(v) \cup S \cup \{v\}$ such that neighbors of $u_1$ and $u_2$ in $S$ are the same, that happens if and only if $D(u_1,u_2) \cap S = \emptyset$.
 We bound the probability of this event by the union bound 
 \begin{align*}
   \Pr\bigl[ \exists u_1 & \in N(v) \setminus S, u_2 \notin N(v) \cup S \cup \{v\} : S \cap D(u_1,u_2) = \emptyset\bigr] \\
   &\leq \sum_{u_1, u_2} (1-p)^{\sd(u_1,u_2)-1} \le n^2 (1-p)^{d-1}. 
 \end{align*}
 
 The $-1$ in the exponent is caused by $v \in D(u_1,u_2)$, but $v$ cannot be chosen to belong to~$S$. 
 Thus, the probability that $v$ is not a function of $S$ is at most $n^2 e^{-p(d-1)}$, which is
 strictly smaller than $\frac{1}{n}$ whenever $c > 3$ and $n$~is big enough. 
 Clearly, the expected size of~$S$ is $p(n-1) = \frac{n-1}{n} d$. 
 Thus by Markov inequality,  $\Pr[|S| > d] \le \frac{n-1}{n} = 1 - \frac{1}{n}$. 
 This means that with a positive probability $|S| \le d$ and $v$ is a function of $S$ and we can conclude that $\fun_G(v) \leq d$.
\end{proof} 

%

\subsection{Random Graphs}
In this section, we prove our bounds for the functionality of random graphs.
Recall that $\fun\bigl(G(n,p)\bigr) = \fun\bigl(G(n,1-p)\bigr)$.
Thus in this section, we always assume that $p \leq \frac{1}{2}$.

\subsubsection{Lower Bound}
We state the lower bound using the following function defined for the non-negative real numbers.
Let $W: \R_0 \to \R_0$ be the inverse function\footnote{This function is usually called the 0-branch of the Lambert function.} of the function $x \mapsto xe^x$.
One can easily verify the following bounds for $W$.
\begin{equation}
\label{eq:WBound}
W(x) =
\begin{cases}
\Theta(\ln x) & \text{if $x > e$} \\
\Theta(x)  & \text{if $0 \leq x \leq e$}
\end{cases}
\end{equation}

\begin{theorem}
\label{thm:RandomLowerBound}
Let $p = p(n) \in \left[\frac{\ln n}{n},\frac{1}{2}\right]$.
The functionality of the graph $G(n,p)$ is with probability at least $1 - o(1)$ larger than
\[
\Omega\left(\frac{1}{p} \cdot W\left(\frac{p^2n}{\ln n}\right)\right).
\]
\end{theorem}
\begin{proof}
Let $k = \left\lfloor \frac{c}{p} \cdot W\left(\frac{p^2n}{\ln n}\right) \right\rfloor$ for an appropriate constant $c$ that will be set later and let $n$ be large enough.
We assume that $k \geq 1$, otherwise there is nothing to prove (note that for $p \geq \frac{\ln n}{n}$ the graph $G(n,p)$ is non-empty with high probability and thus, $\fun\bigl(G(n,p)\bigr) \geq 1$).
Using (\ref{eq:WBound}), it is straightforward to verify $k \leq O(\sqrt{n \cdot \ln n})$ for all $p$.
We will show that the functionality of the graph $G(n,p) = (V, E)$ is at most $k$ with only a small probability.
First, we fix vertices $v, u_1,\dots, u_k \in V$ and we bound the probability that $v$ is a function of $u_1,\dots,u_k$.
Let $B \subseteq V \setminus \{v,u_1,\dots,u_k\}$ be a set of vertices of $G$ that are not connected to any of the vertices $u_1,\dots, u_k$.
By the definition of functionality, if $v$ were a function of $u_1,\dots,u_k$ then either all vertices from $B$ would be connected to $v$ or none of them.
Thus, we have
\[
\Pr[v \text{ is a function of } u_1,\dots,u_k] \leq p^{|B|} + (1-p)^{|B|} \leq 2\cdot (1-p)^{|B|}.
\]

Each vertex different from $v,u_1,\dots,u_k$ is in the set $B$ with probability ${(1 - p)^k}$, independently of the other vertices.
Thus, $\mu = \E[|B|] = (1 - p)^k\cdot (n - k - 1)$ and we bound $|B|$ by Chernoff bound~\cite{prob_comp_book}:
\[
\Pr\bigl[ |B| \leq (1 - \delta)\mu \bigr] \leq e^{\frac{-\delta^2 \mu}{2}}.
\]
Let $b = (1 - \delta)\mu$.
Putting it all together, we have
\begin{align*}
\Pr[& v \text{ is a function of } u_1,\dots,u_k] \leq e^{\frac{-\delta^2 \mu}{2}} + 2\cdot (1-p)^{b} \\
 &\leq e^{\frac{-\delta^2 \mu}{2}} + 2\cdot e^{-p(1 - \delta)\mu} \\
 &\leq 3 \cdot e^{\frac{-p}{4} \mu} \tag{set $\delta^2 = p$, and $p \leq \frac{1}{2}$ by the assumption}
\end{align*}
There is $n\cdot \binom{n-1}{k} \leq n^{k+1}$ choices how to pick $v,u_1,\dots,u_k$.
Therefore, by union bound, we have the following.
\begin{align*}
\Pr\bigl[\fun\bigl(G(n,p)\bigr) &\leq k\bigr] \leq n^{k + 1} \cdot 3 e^{\frac{-p}{4}\mu} \\
&= \exp\bigl((k+1)\cdot \ln n - \frac{p}{4}\cdot(1-p)^k\cdot (n - k - 1) + \ln 3\bigr) \\
&\leq \exp\bigl(2k\cdot\ln n - \frac{p}{5}\cdot (1-p)^k n\bigr) \tag{by $1 \leq k \leq o(n)$}
\end{align*}
Using our choice of $k$, we will show that
\begin{equation}
\label{eq:Exponent}
\frac{p}{5}\cdot (1-p)^k n \geq 3 k \cdot \ln n
\end{equation}
which implies that $\Pr\bigl[\fun(G(n,p)) \leq k\bigr] \leq 1/n^k$.

Choose $\gamma > 0$ such that $e^{-\gamma} = 1-p$.
By rearranging (\ref{eq:Exponent}) (and multiplying both sides by $\gamma$)  we get the following.
\[
\frac{\gamma p n}{15 \ln n} \geq \frac{\gamma k}{(1-p)^k} = \gamma k \cdot e^{\gamma k}
\]
After applying the function $W$ we get the following inequality.
\begin{equation}
\label{eq:WBoundForK}
\gamma k \leq W\left(\frac{\gamma p n}{15 \ln n}\right) 
\end{equation}

Since we assume that $p \leq \frac{1}{2}$, we have that $e^{-2p} \leq 1-p \leq e^{-p}$ and thus $p \leq \gamma \leq 2p$.
Plugging it into (\ref{eq:WBoundForK}), we get the following.
\begin{align*}
 k &\leq \frac{1}{\gamma} \cdot W\left(\frac{\gamma p n}{15 \ln n}\right) \leq \frac{1}{p} \cdot W\left(\frac{2 p^2 n}{15 \ln n}\right) \tag{since $W$ is increasing} \\
 &= \frac{1}{p} \cdot \Theta\left(W\left(\frac{p^2 n}{\ln n}\right)\right) \tag{by (\ref{eq:WBound})}
\end{align*}
Thus, for a right constant $c$, our setting of $k$ satisfies (\ref{eq:WBoundForK}) and since $W$ is increasing, it also satisfies ($\ref{eq:Exponent}$).
Consequently, it yields the sought lower bound.

\end{proof}

Using (\ref{eq:WBound}), Theorem~\ref{thm:RandomLowerBoundFinal} is a corollary of Theorem~\ref{thm:RandomLowerBound}.
\RandomLowerBoundFinal*
\begin{proof}
 By Theorem~\ref{thm:RandomLowerBound}, we have that $\fun \bigl(G(n,p)\bigr) \geq \Omega\left(\frac{1}{p} \cdot W\left(\frac{p^2 n}{\ln n}\right)\right)$ w.h.p.
 If $p \geq \frac{1}{n^{\frac{1}{2}-\varepsilon}}$, then $\frac{p^2n}{\ln n} \geq \frac{n^{2\varepsilon}}{\ln n}$ and thus, by the first bound of (\ref{eq:WBound}), we have that
 \[
  \frac{1}{p} \cdot W\left(\frac{p^2 n}{\ln n}\right) \geq \frac{1}{p} \cdot \Omega\bigl(\ln n^{2\varepsilon}\bigr) = \Omega\left(\frac{\varepsilon}{p} \cdot \ln n\right). 
 \]
 
 If $\frac{1}{\sqrt{n}} \leq p \leq \frac{1}{n^{\frac{1}{2} - \varepsilon}}$, then by (\ref{eq:WBound}), we have $W\left(\frac{p^2 n}{\ln n}\right) \geq \Omega\left(\frac{1}{\ln n}\right)$ in both cases when $\frac{p^2 n}{\ln n} > e$ and $\frac{p^2 n}{\ln n} \leq e$. 
 
 If $\frac{\ln n}{n} \leq p \leq \frac{1}{\sqrt{n}}$, then $\frac{p^2 n}{\ln n} < 1$ and thus by the second bound of (\ref{eq:WBound}), we have that
 \[
  \frac{1}{p} \cdot W\left(\frac{p^2 n}{\ln n}\right) \geq \frac{1}{p} \cdot \Omega\left(\frac{p^2 n}{\ln n}\right) = \Omega\left(\frac{pn}{\ln n}\right).
 \]

\end{proof}

\subsubsection{Upper Bound}
In this section, we will show the upper bound.
One of the tools used for the upper bound is the approximation of balls and bins distribution by independent Poisson random variables.
Suppose we have $m$ balls and $n$ bins.
Each ball is thrown into the $i$-th bin with probability $p_i$, independently of the other throws (and $\sum_{i \in [n]} p_i = 1$, i.e., each ball is thrown into some bin).
Let $X_i$ be the number of balls in the $i$-th bin after all the balls are thrown.
Let $Y_i$ be a random variable of Poisson distribution such that $\E[Y_i] = p_i m$ and is independent of other variables $Y_j$'s.
Note that $\E[X_i] = p_i m$ as well, however the variables $X_1,\dots,X_n$ are not independent.
Nevertheless, the variables $Y_i$'s yield a good approximation for the variables $X_i$'s for events of small probability, as stated in the following theorem.

\begin{theorem}[\cite{prob_comp_book}]
\label{thm:PoisApx}
Let $A \subseteq \N^n$ be an event.
Then,
\[
\Pr[(X_1,\dots,X_n) \in A] \leq e^{1/12m}\sqrt{2 \pi m}\cdot \Pr[(Y_1,\dots,Y_n) \in A]
\]
\end{theorem}
Mitzenmacher and Upfal~\cite{prob_comp_book} only gave a proof of the previous theorem for the uniform setting, where each $p_i = \frac{1}{n}$, i.e., each bin has the same probability, that we throw a ball into this bin.
However, the proof for the non-uniform setting is the same and we include it in the appendix for the sake of completeness.
We remark that in Theorem~\ref{thm:PoisApx}, the factor\footnote{Mitzenmacher and Upfal stated this theorem with the constant $e$ instead of $e^{1/12m}\sqrt{2\pi}$ because of usage of a looser bound for $m!$.} $e^{1/12m}\sqrt{2 \pi m}$ can be replaced by 2 for monotone events (the probability of the event is either increasing or decreasing when the number of balls $m$ is increasing).
That will be also our case, however, we use this simpler variant as this improvement would not asymptotically improve our result.
Further, we will use a more precise variant of well-known bounds for the binomial coefficient (see for example the book by Alon and Spencer~\cite{prob_method_book}).
We state a proof in the appendix.

\begin{restatable}{lemma}{BinomialBound}
\label{lem:BinomialBound}
Let $\alpha \in (0,\frac{1}{2})$ and $n \geq 2$. Then,
\[
 \frac{1}{22\cdot n^{3/2}}\cdot  e^{H(\alpha ) n} \leq \binom{n}{\lfloor \alpha n \rfloor} \leq e^{H(\alpha) n},
\]
where $H(\alpha) = -\alpha \ln \alpha - (1-\alpha) \ln (1 - \alpha)$.
\end{restatable}

As discussed in Section~\ref{sec:Intro}, the upper bound for the functionality of random graphs follows from Theorem~\ref{thm:RandomUpperBound}.
Recall that a distinguishing set of vertices is a set $D \subseteq V(G)$ such that there are no two vertices $u,v \in V(G) \setminus D$ with $N(u) \cap D = N(v) \cap D$.
\RandomUpperBound*

\begin{remark}
 The proof of Theorem~\ref{thm:RandomUpperBound} works even for $p \geq 1/n^d$ where $d$ is an arbitrary constant. However, for $p$ smaller than $1/\sqrt{n}$ the theorem would provide an upper bound $O(\sqrt{n} \cdot \ln n)$ which is higher than the upper bound for all graphs given by Theorem~\ref{thm:GeneralFunUB}.
\end{remark}

In the following proof we use notation $[z] = \{1,\dots,z\}$ for $z \in \N$, and for a set $S$ and $k \in \N$, $\binom{S}{k}$ denotes the set of all subsets of $S$ of size $k$.
\begin{proof}
Let $k = \frac{c'}{p} \ln n$ for a sufficiently large constant $c'$ and let $n$ be large enough.
We will show that each induced subgraph of $G=G(n,p)$ has a distinguishing set of size at most $6k$.
For a set $S \subseteq V(G)$ of size~$s$ we let $r_s$ be the probability that the induced subgraph $G[S]$ does not have a distinguishing set of size at most $6k$.
(Clearly, this is the same for all sets of the same size.)
Note that if $|S| \leq 6k$, then $G[S]$ always has a distinguishing set of size at most $6k$.
Next, we provide an estimate for $r_s$ that will allow us to show
\[
  \sum_{s \ge 6k} \binom{n}{s} r_s = o(1),
\]
which will finish the proof.
In particular, we will show that for all $s \ge 6k$ holds that $r_s \leq n^{-2s}$ (if $c'$ is large enough).

Fix a set $S \subseteq V(G)$ of size $s \geq 6k$ and write $S = S_1 \dot\cup S_2 \dot\cup S_3$ with each $|S_i|$ being $\lfloor s/3\rfloor$ or $\lceil s/3\rceil$.
For $i = 1, 2$ or~$3$ we let $T_i = S \setminus S_i$.
We will try to use a set $D \in \binom{S_i}{k}$ to distinguish vertices in~$T_i$.
We will sometimes fail, but barely.
Formally, we say a set $D \in \binom{S_i}{k}$ is \emph{almost distinguishing} for $T_i$, if there is a set $E_D \subseteq T_i$ of exceptional vertices such that $|E_D| \leq k$ and for each pair of different vertices $u,v \in T_i \setminus E_D$ we have $N(u) \cap D \neq N(v) \cap D$, i.e., $u$ and $v$ have a different neighborhood in $D$.
We let $A_i$ be the event that no set~$D \in \binom{S_i}{k}$ is almost distinguishing for $T_i$.
Suppose none of~$A_i$ occurs.
Then, each $S_i$ contains a set $D_i$ that is almost distinguishing for $T_i$.
Then $\Dbar = \cup_{j=1}^3 (D_j \cup E_{D_j})$ is a distinguishing set in $G[S]$ of size at most~$6k$.
Indeed, let $u,v$ be vertices in $S \setminus \Dbar$.
Then, there is $i \in \{1,2,3\}$ such that $u,v \not \in S_i$, i.e., $u,v \in T_i$.
The set $D_i \subseteq S_i$ distinguishes all vertices in $T_i \setminus E_{D_i}$.
Since $u,v$ are not in $E_{D_i}$ as well, they have distinct neighborhoods in $D_i \subseteq D$, i.e., $N(u) \cap D_i \neq N(v) \cap D_i$.
Thus, the set $\Dbar$ is a distinguishing set of $G[S]$ and we have that $r_s \le \Pr[A_1] + \Pr[A_2] + \Pr[A_3]$.
See Figure~\ref{fig:DistinguishingSet} for an illustration of how we create the distinguishing set $\Dbar$.

\begin{figure}[ht]
 \centering
 \includegraphics{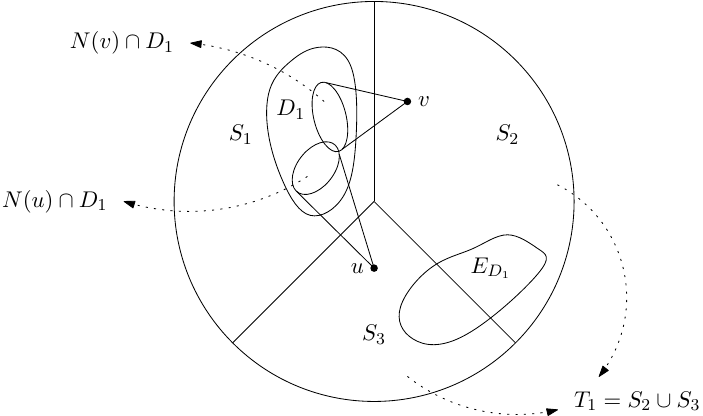}
 \caption{We divide the vertices of $G[S]$ into three sets $S_1,S_2$, and $S_3$ with roughly the same size.
 Each $S_i$ contains an almost distinguishing set $D_i$ for $T_i$ with the set $E_{D_i}$ of the exceptional vertices.
 Then, each pair of vertices not in $\Dbar = \bigcup_{j = 1}^3 D_j \cup E_{D_j}$ is at least in one set $T_i \setminus E_{D_i}$ and thus
they can be distinguished by the neighborhood in $D_i \subseteq \Dbar$, i.e., $\Dbar$ is a distinguishing set of $G[S]$.}
 \label{fig:DistinguishingSet}
\end{figure}

We will now estimate $\Pr[A_1]$ ($= \Pr[A_2] = \Pr[A_3]$).
For a particular set $D \in \binom{S_1}{k}$ let us study when $D$ is not almost distinguishing for $T_1$:
each vertex of~$T_1$ falls in one of~$2^k$ ``bins'' based on the neighborhood with~$D$.
Formally, for $L \subseteq D$, let $B_L = \{v \in T_1 \mid N(v) \cap D = L\}$.
We choose arbitrarily one vertex from each (nonempty) bin $B_L$ and put all the rest to the set~$E_D$ of
exceptional vertices.
Thus,
\[
  |E_D| = \sum_{L \subseteq D} \max\{0, |B_L| - 1\}.
\]
We want $|E_D| \le k$.
Let $q = \Pr[|E_D| > k]$.
A crucial observation is that if we take \emph{disjoint} sets~$D, D' \in \binom{S_1}{k}$ then the events $|E_D| > k$ and $|E_{D'}| > k$ are independent.
Let $D_1,\dots,D_z \in \binom{S_1}{k}$ be disjoint subsets of $S_1$ for $z = \lfloor |S_1| / k \rfloor$.
We bound the probability of the event $A_1$ by the probability that none of the sets $D_1,\dots,D_z$ is almost distinguishing for $T_1$, i.e.,
\begin{equation}
   \Pr[A_1] \le \Pr\bigl[\forall j \in [z]: |E_{D_j}| > k\bigr] \leq  q^{\bigl\lfloor \frac{\lfloor s/3 \rfloor}{k} \bigr\rfloor} \leq q^{s/ 12 k},
   \label{neq:ABound}
\end{equation}
where the last inequality holds because $s \geq 6k$.

Now let us find, or rather estimate $\Pr[|E_D| > k]$ for a set $D \in \binom{S_1}{k}$.
To estimate this random variable we use the Poisson approximation for balls-into-bins given by Theorem~\ref{thm:PoisApx}.
Note that $(|B_L|)_{L \subseteq D}$ has exactly the same distribution as if we throw $t = |T_1|$ balls into $2^k$ bins, where the probability of throwing a ball (vertex) into the bin $B_L$ is $p_\ell = p^\ell(1-p)^{k-\ell}$ for $\ell = |L|$.
Thus, we replace $|B_L|$ by $Y_L$, where $Y_L$ has Poisson distribution with $\E[Y_L] = p_{|L|} \cdot t$ and the random variables $(Y_L)_{L \subseteq D}$ are independent.
Let $Z_L = \max\{0, Y_L-1\}$ and $Z = \sum_{L \subseteq D} Z_L$.
By Theorem~\ref{thm:PoisApx}, we have $\Pr[|E_D| > k] \leq 3\sqrt{t}\cdot \Pr[Z > k]$.

To estimate this further, we use the standard ``Chernoff trick'':
$\Pr[Z > k] = \Pr[ e^{hZ} > e^{h k} ] < \E[e^{h Z}] e^{-h k}$ for any $h > 0$.
As $Z$ is a sum of independent random variables, we have $\E[e^{h Z}] = \prod_{L \subseteq D} \E[e^{h Z_L}]$.
Let $L \subseteq D$ and $\ell = |L|$.
We denote $\lambda_\ell = \E[Y_L] = p_\ell \cdot t$.
Let us bound $E[e^{h Z_L}]$, writing $w = e^h $.
\begin{align}
    \E[e^{hZ_L}] &= e^{0} \Pr[Y_L = 0] + \sum_{i=1}^\infty e^{h (i-1)} \Pr[Y_L=i] \nonumber \\
                 &= e^{-\lambda_\ell} + \sum_{i=1}^\infty e^{h (i-1)} e^{-\lambda_\ell} \frac{\lambda_\ell^i}{i!} \nonumber \\
                 &= e^{-\lambda_\ell} + e^{-\lambda_\ell - h } \sum_{i=1}^\infty \frac{(\lambda_\ell e^h )^i}{i!} \nonumber \\
                 &= e^{-\lambda_\ell} + e^{-\lambda_\ell - h } ( e^{\lambda_\ell e^h }  - 1)  \nonumber \\
                 &= e^{-\lambda_\ell}( 1 + ( e^{\lambda_\ell w}  - 1)/w ) \nonumber  \\
                 &\le e^{-\lambda_\ell}( 1 + \lambda_\ell + \lambda_\ell^2 w)  
                 \tag{as $e^x \le 1+x+x^2$ on $[0,1]$}  \\
                 \label{neq:ZL_bound}
  \end{align}

  Next, we argue that the function $f(\lambda) = e^{-\lambda}(1 + \lambda + \lambda^2w)$ is increasing in $\lambda$ in our settings of parameters.
  Note that $f'(\lambda) = e^{-\lambda} \lambda \bigl(w(2 - \lambda) - 1\bigr)$.
  We have $w > 1$ and thus, $f'(\lambda) > 0$ for $0 \leq \lambda \leq 1$.
  If $c'$ is large enough, then for all $\ell$ holds that
  \[
   \lambda_\ell = p^\ell (1-p)^{k-\ell} \cdot t \leq (1-p)^k \cdot n \leq e^{-c' \ln n}  \cdot n \leq 1.
  \]
  Thus, $f(\lambda_\ell) \leq f(\lambda_{\ell'})$ if $\lambda_\ell \leq \lambda_{\ell'}$ (which holds when $\ell \geq \ell'$) and we can upper bound $\E[e^{hZ_L}]$ by $e^{-\lambda_0} (1 + \lambda_0 + \lambda_0^2 w)$ (using (\ref{neq:ZL_bound})).
  We distinguish two cases based on whether $p \geq 0.3$ or not.

\paragraph*{Case 1: $0.3 \leq p \leq \frac{1}{2}$.}
Since $\lambda_0 \geq \lambda_\ell$ for all $\ell \in \{0,\dots,k\}$ as $p \leq \frac{1}{2}$, we have that for all $Z_L$ holds that
$\E[e^{hZ_L}] \leq f(\lambda_{|L|}) \leq f(\lambda_0) = e^{-\lambda_0}( 1 + \lambda_0 + \lambda_0^2 w)$, by (\ref{neq:ZL_bound}) and $f$
being increasing on $[0,1]$. Further, $1 + \lambda_0 + \lambda_0^2 w \le (1+\lambda_0)(1+\lambda_0^2 w)$.
Consequently we have,
\begin{equation}
    \Pr[Z > k] \le w^{-k} \bigl( e^{-\lambda_0}( 1 + \lambda_0 + \lambda_0^2 w)  \bigr)^{2^k} \le w^{-k} \cdot \exp ( \lambda_0^2 \cdot w \cdot 2^k) \label{neq:ProbZbound}
\end{equation}

We set $w = \frac{1}{\lambda_0^2 \cdot 2^k}$.
We need to verify that $w > 1$ (as $w = e^h$ for some $h > 0$) and $\lambda_0\cdot w < 1$ so that (\ref{neq:ZL_bound}) holds.
\[
\lambda_0 \cdot w = \frac{1}{\lambda_0 \cdot 2^k} = \frac{1}{(1-p)^{k}\cdot t \cdot 2^k} \leq \frac{1}{t}
\]
The last inequality holds since $p \leq \frac{1}{2}$.
Further, we will actually show that $w > n^d$ for arbitrary large constant $d$ if we set $c'$ large enough.
\begin{align}
w &= \frac{1}{\lambda_0^2 \cdot 2^k} = \frac{1}{(1-p)^{2k} \cdot t^2 \cdot 2^k} \nonumber \\
&\geq \frac{1}{n^2 \cdot \bigl(2(1-p)^2\bigr)^k} \tag{since $t \leq n$} \nonumber \\
&\geq \frac{1}{n^2\cdot 0.98^k} \tag{since $p \geq 0.3$} \nonumber \\
&\geq n^d \tag{for $c'$ large enough as $k \geq 2c' \ln n$} \\
\label{neq:WBound}
\end{align}

Thus, it remains to plug $w$ into (\ref{neq:ProbZbound}) to get the final bound for $\Pr[|E_D| > k]$.
  \begin{align*}
    q &= \Pr[|E_D| > k]  \leq 3\sqrt{t}\cdot \Pr[Z > k] \tag{by Theorem~\ref{thm:PoisApx}} \\
    &\leq 3\sqrt{n} \cdot w^{-k} \cdot e^1  \tag{by (\ref{neq:ProbZbound})} \\
    &\leq n^{-dk/2} \tag{by (\ref{neq:WBound})}
  \end{align*}

Now, we finish the bound that $r_s < n^{-2s}$ (recall that $r_s$ is the probability that an induced subgraph $G'$ on $s$ vertices does not contain a distinguishing set of size $6k$).
\begin{align*}
  r_s &\le \Pr[A_1] + \Pr[A_2] + \Pr[A_3] \leq 3 q^{s/12k} \tag{by (\ref{neq:ABound})}\\
  &\le 3 n^{-d s / 24} \leq n^{-2 s}  \tag{if $d$ is large enough}
\end{align*}
Thus, we finished the proof of the case when $p \geq 0.3$.

\paragraph*{Case 2: $\frac{1}{\sqrt{n}} \leq p < 0.3$.}
Proof of this case is similar to the previous one.
However, we will consider two types of bins.
Note that for a vertex in $T_1$ and a set $D \in \binom{S_1}{k}$, the expected number of neighbors in $D$ is $pk$.
Thus, for bins $B_L$ where $|L|$ is around $pk$ we use the previous calculation to show that they contain at most $k/2$ exceptional vertices with high probability.
Further, we will show that all other bins will contain with high probability at most $k/2$ vertices in total.
Thus again, it will hold that $|E_D| \leq k$ with high probability.

First, we bound the number of vertices in the bins with the non-typical size of the neighborhood in $D$.
For a vertex $v \in T_1$, let $N_v$ be the number of its neighbors in $D$.
Note that $N_v = \sum_{u \in D} I^v_u$, where $I^v_u$ is an indicator random variable such that $I^v_u = 1$ when there is an edge $\{v,u\}$.
We have $\mu = \E[N_v] = pk =  c'\cdot \ln n$ and by Chernoff bound for any $\delta \in (0,1)$:
\[
\Pr\bigl[|N_v - \mu| \geq \delta \mu\bigr] \leq 2\cdot \exp\left(-\frac{\delta^2}{3} \cdot c' \cdot \ln n\right).
\]
Note that the random variables $N_v$'s for $v \in T_1$ are independent.
We set $\delta = \frac{1}{10}$ and we conclude the following bound:
\begin{align}
\Pr&\left[\exists ~ \frac{k}{2} \text{ vertices } v \in T_1: |N_v - \mu| \geq \frac{\mu}{10}\right] 
   \leq \binom{t}{k/2} \cdot 2^{k/2} \cdot \exp\left(-\frac{k}{600} \cdot c' \ln n\right) \nonumber \\
   &\leq \exp\left(\frac{k}{2} \ln n  -\frac{k}{600} \cdot c' \ln n + \frac{k}{2} \ln 2 \right) \leq n^{-dk}, \label{neq:NvBound}
\end{align}
where $d$ can be again arbitrarily large constant if we set $c'$ large enough.
Let $I = \{ \lceil 0.9 \cdot c' \ln n \rceil, \dots, \lfloor 1.1 \cdot c' \ln n \rfloor \}$ and let $\cI$ be the set of subsets of $D$ with size in $I$, i.e., $\cI = \{L \subseteq D : |L| \in I\}$.
By (\ref{neq:NvBound}), it holds that with high probability there are at most $\frac{k}{2}$ vertices in all bins $B_L$ with $L \not \in \cI$.

For the bins $B_L$ with $L \in \cI$, we use a similar calculation as in the previous case.
Recall that $Z_L = \max\{0,Y_L - 1\}$, where $Y_L$ is a random variable with Poisson distribution with $\E[Y_L] = \lambda_{|L|} = p_{|L|} \cdot t = p^{|L|}(1-p)^{k - |L|}\cdot t$.
Let $Z' = \sum_{L \in \cI} Z_L$ and subsequently, $\Pr[Z' > k/2] = \Pr[e^{hZ'} > e^{hk/2}] < \E[e^{hZ'}]e^{-hk/2}$ for $h > 0$.
By (\ref{neq:ZL_bound}), for $|L| = \ell$ and $w = e^h$ we have $\E[e^{hZ_L}] \leq e^{-\lambda_\ell}(1 + \lambda_\ell + \lambda^2_\ell w)$, if $\lambda_\ell w < 1$.
Let $\lambda' = \lambda_{\ell'}$ for $\ell' = \lceil 0.9 \cdot c' \ln n \rceil$ (i.e., $\ell'$ is the minimum of $I$, and $\lambda'$ is the maximum of $\lambda_\ell$ for $\ell \in I$).
For $a = |\cI|$, we have
\begin{align}
\Pr[Z' > k/2] &< w^{-k/2}\cdot \E[e^{hZ'}] =  w^{-k/2}\cdot \prod_{L \in \cI} \E[e^{hZ_L}] \nonumber \\
&\leq w^{-k/2} \cdot \bigl(e^{-\lambda'}( 1 + \lambda' + \lambda'^2 w)\bigr)^a \tag{by (\ref{neq:ZL_bound}) and definition of $\lambda'$} \nonumber \\
&\leq w^{-k/2} \cdot \exp(\lambda'^2 \cdot w \cdot a) \label{neq:ZprimeBound}
\end{align}
We set $w = \frac{1}{\lambda'^2 a}$.
Again, we will show that $\lambda' w < 1$ (so that the bound (\ref{neq:ZL_bound}) holds) and $w \geq n^d$ for arbitrarily large constant $d$ if $c'$ is large enough.
To do so, we will use two technical inequalities that are stated (as Propositions~\ref{prp:TechnicalForLambdaW} and~\ref{prp:TechnicalForW}) and proved in the appendix.
Next, we estimate $a$.
\[
\binom{k}{\lfloor 1.1 \cdot c' \ln n \rfloor} \leq a \leq |I|\cdot \binom{k}{\lfloor 1.1 \cdot c' \ln n \rfloor}
\]
Note that $|I| = \lfloor 1.1 \cdot c' \ln n \rfloor - \lceil 0.9 \cdot c'  \ln n \rceil \leq 0.2 \cdot c'  \ln n$ and $1.1 \cdot c' \ln n = 1.1 p \cdot k$.
Thus, by Lemma~\ref{lem:BinomialBound} we have
\begin{equation}
\frac{1}{22 \cdot k^{3/2}} \cdot e^{H(1.1p) k} \leq a \leq 0.2 \cdot c' \ln n \cdot e^{H(1.1p) k}. \label{neq:aBound}
\end{equation}
Now, we show that $\lambda' w < 1$.
\begin{align}
\lambda' w &= \frac{1}{p^{\ell'} (1-p)^{k - \ell'} \cdot t \cdot a} \nonumber \\
&\leq \frac{22 \cdot k^{3/2}}{p^{\ell'} \cdot e^{-\frac{5p}{4}(k-\ell')}\cdot   e^{H(1.1p) k}} \tag{by (\ref{neq:aBound}) and $(1-p) \geq e^{-\frac{5p}{4}}$ for $p \leq 0.3$} \nonumber \\
&= 22 \cdot k^{3/2} \cdot \exp \left( \frac{5p}{4}(k-\ell') -\ell'\ln p - H(1.1p) k \right) \label{neq:LambdaWBound}
\end{align}
We expand the exponent of (\ref{neq:LambdaWBound}) with $\ell' = \lceil 0.9 \cdot c' \ln n \rceil$ and $k = \frac{c'}{p} \ln n$.
\begin{align*}
\frac{5p}{4}&(k-\ell') -\ell'\ln p - H(1.1p)k \leq c'\ln n \cdot \left(\frac{5}{4} - \frac{45p}{40} - \frac{9}{10} \ln p - \frac{H(1.1p)}{p} \right) \\
&\leq -\frac{c'}{10} \ln n \tag{for $p \leq 0.3$ by Proposition~\ref{prp:TechnicalForLambdaW} in the appendix}
\end{align*}
We plug this bound into (\ref{neq:LambdaWBound}) to get the following bound, $\lambda' w \leq 22\cdot k^{3/2} n^{-c'/10}$ which is smaller than 1 if $c'$ is large enough as $k \leq c'\sqrt{n}\ln n$.

Analogously, we will show that $w \geq n^d$ for a large enough constant $d$.
\begin{align*}
w &= \frac{1}{p^{2\ell'}(1-p)^{2(k-\ell')} \cdot t^2 \cdot a}   \\
&\geq \frac{5\cdot \exp\bigl(2(k-\ell')p - 2\ell'\ln p - H(1.1p)k\bigr)}{n^2 \cdot c'\ln n} \tag{by (\ref{neq:aBound})}
\end{align*}
Again, we expand the exponent.
\begin{align*}
2(k &-\ell')p - 2\ell'\ln p - H(1.1p)k \\
&\geq c'\ln n \cdot \left(2 - 2\left(0.9 + \frac{1}{c'\ln n}\right)(p + \ln p) - \frac{H(1.1p)}{p}\right) \tag{as $\ell' \leq 0.9\cdot c' \ln n + 1$} \\
&\geq c'\ln n \cdot \left(2 - 2(p + \ln p) - \frac{H(1.1p)}{p}\right) \tag{if $c'$ is large enough} \\
&\geq c' \ln n \tag{for $p \leq 0.3$ by Proposition~\ref{prp:TechnicalForW} in the appendix}
\end{align*}
Thus for any $d > 0$, we have
\begin{equation}
w \geq \frac{5 n^{c'}}{n^2 \cdot c' \ln n} \geq n^d, \label{neq:WBound2}
\end{equation}
if $c'$ is large enough.
Now, we can finish our bound for $\Pr[Z' > k/2]$.
By definition of $w$, (\ref{neq:ZprimeBound}), and (\ref{neq:WBound2}) we have
\begin{equation}
\Pr[Z' > k/2] \leq w^{-k/2}\cdot \exp (\lambda'^2 \cdot w \cdot a) \leq e\cdot n^{-dk/2}. \label{neq:ProbZprime}
\end{equation}
Further, we bound $q$.
Let $F$ be the event that there are more than $\frac{k}{2}$ vertices in bins $B_L$ with $L \not \in \cI$, and $E'_D$ be the number of exceptional vertices in bins $B_L$ with $L \in \cI$, i.e.,
$E'_D = \sum_{L \in \cI} \max\{0, |B_L| - 1\}$.
\begin{align*}
q &= \Pr\bigl[|E_D| > k\bigr] \leq \Pr[F] + \Pr[\lnot F]\cdot \Pr\bigl[|E_D| > k ~|~ \lnot F\bigr] \\
&\leq n^{-dk} + 2\cdot \Pr\bigl[|E'_D| > k/2\bigr] \tag{by (\ref{neq:NvBound})} \\
&\leq n^{-dk} + 6\sqrt{n} \cdot \Pr[Z' > k/2] \tag{by Theorem~\ref{thm:PoisApx}} \\
&\leq n^{-dk/3} \tag{by (\ref{neq:ProbZprime})}
\end{align*}
To finish the proof of this case (and of the whole theorem) we will conclude again that the probability $r_s$ (that an induced subgraph on $s$ vertices does not contain a distinguishing set of size $6k$) is at most $n^{-2s}$.
\[
r_s \leq 3q^{s/12k} \leq 3n^{-ds/36} \leq n^{-2s} \tag{if $d$ is large enough}
\]
\end{proof}

\section{Symmetric Difference}
\label{sec:SD}
In this section, we will prove our lower and upper bounds for the symmetric difference of interval and circular arc graphs.

\subsection{Circular Arc Graphs}
In this section, we prove that the symmetric difference of circular arc graphs is $\Theta(\sqrt{n})$.
More formally, we prove the following two theorems.

\begin{theorem}
\label{thm:CircArcUB}
 Any circular arc graph $G \in \CA_n$ has a symmetric difference at most $O(\sqrt{n})$.
\end{theorem}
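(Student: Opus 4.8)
The plan is to reduce the statement to a single structural claim: \emph{every circular arc graph on $m$ vertices contains a pair of vertices $u,v$ with $\sd(u,v) = O(\sqrt{m})$.} This suffices because every induced subgraph $H$ of a circular arc graph is again a circular arc graph, so applying the claim to the worst $H$ (on $|V(H)| \le n$ vertices) yields $\min_{u,v \in V(H)} \sd_H(u,v) = O(\sqrt{|V(H)|}) = O(\sqrt{n})$, which bounds $\sd(G)$. So from now on I fix a circular arc representation of $G$, and after a harmless perturbation I assume the $2n$ arc-endpoints are in general position (all distinct). Each arc $A_i$ is recorded by its two endpoints on the circle, a \emph{start} $s_i$ and an \emph{end} $e_i$.

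The core idea is a pigeonhole argument combined with a localization lemma. First I order the $2n$ endpoints cyclically and cut the circle into $t := \lfloor \sqrt{n-1} \rfloor$ consecutive \emph{blocks}, each a contiguous arc of the circle carrying at most $\lceil 2n/t \rceil = O(\sqrt{n})$ endpoints. To each arc $A_i$ I assign the label $(\text{block of } s_i,\ \text{block of } e_i) \in [t]\times[t]$. Since there are at most $t^2 \le n-1 < n$ distinct labels, two arcs $u \neq v$ receive the same label; thus $s_u,s_v$ lie in one common block $B_s$ and $e_u,e_v$ lie in one common block $B_e$. It then remains to bound $\sd(u,v)$ by the number of endpoints in $B_s \cup B_e$, which is $O(\sqrt n)$, and the theorem follows.

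The heart of the argument, and the step I expect to be the main obstacle, is the localization lemma: \emph{every vertex $w$ in the symmetric difference $(N(u)\sym N(v))\setminus\{u,v\}$ has an endpoint inside $B_s \cup B_e$.} I will prove the contrapositive by a continuous deformation. Slide the start of the arc from $s_u$ to $s_v$ along the short way within $B_s$, and then slide the end from $e_u$ to $e_v$ within $B_e$, obtaining a one-parameter family of arcs interpolating between $u$ and $v$. Whether a fixed arc $w$ intersects the moving arc depends only on the cyclic order of the four endpoints $\{s,e,s_w,e_w\}$, and this order can change only when the single moving endpoint crosses $s_w$ or $e_w$. If $w$ has no endpoint in $B_s \cup B_e$, then no such crossing occurs during either sliding phase, so the intersection status is constant along the whole deformation, giving $w \in N(u) \iff w \in N(v)$ and hence $w \notin N(u)\sym N(v)$. (General position guarantees the fixed endpoint of the arc never coincides with an endpoint of $w$, avoiding degenerate ties.) Consequently each $w$ contributing to $\sd(u,v)$ is charged to a distinct endpoint in $B_s \cup B_e$, so $\sd(u,v) \le |B_s| + |B_e| = O(\sqrt n)$, completing the proof. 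The delicate point to get right is that the deformation argument correctly accounts for the circular wrap-around and containment cases, which is exactly why I phrase intersection purely through the cyclic order of the four relevant endpoints.
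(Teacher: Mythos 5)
Your overall scheme is essentially the paper's: the paper also pigeonholes arcs by the approximate location of their two endpoints (it places each arc $[a,b]$ at the point $(a,b)$ in a $2n \times 2n$ square, finds two arcs in a common $2\sqrt{n} \times 2\sqrt{n}$ cell, and bounds $\sd(r,r')$ by the at most $4\sqrt{n}$ arc-endpoints lying in the two short windows between the starts and between the ends); your blocks play the role of its grid cells and your localization lemma is its final counting step. But your proof of the localization lemma has a genuine gap. You assert that the cyclic order of $\{s, e, s_w, e_w\}$ can change only when the moving endpoint crosses $s_w$ or $e_w$; it can also change when the moving endpoint crosses the \emph{other endpoint of the moving arc itself}. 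Nothing in your pigeonhole rules out $B_s = B_e$, and in that case the sliding start can pass the fixed end, at which moment the moving arc flips between a short arc contained in the block and an arc covering the entire complement of the block, simultaneously changing its intersection status with every arc outside the block. Worse, the lemma's conclusion is simply false there: let $B$ be one block, let $u$ have both endpoints in $B$ with $u \subseteq B$, and let $v$ have both endpoints in $B$ but wrap the other way around the circle $C$, so $v \supseteq C \setminus B$. Both receive the label $(B,B)$, yet every arc $w$ lying entirely outside $B$ intersects $v$ and misses $u$; hence $\sd(u,v)$ can be $\Theta(n)$ while no such $w$ has an endpoint in $B_s \cup B_e$. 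Since the pigeonhole hands you an arbitrary same-label pair, the proof as written fails.

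The repair is cheap but necessary: refine the label by one extra bit recording, for an arc whose two endpoints lie in the same block $B$, whether the arc is contained in $B$ or contains $C \setminus B$ (when $B_s \neq B_e$ the type is forced, since any arc starting in $B_s$ and ending in $B_e$ covers exactly one of the two gaps between the blocks, so no bit is needed). Taking $t = \bigl\lfloor \sqrt{(n-1)/2} \bigr\rfloor$ keeps the number of classes below $n$ while each block still carries $O(\sqrt{n})$ endpoints. Then for $B_s \neq B_e$ your deformation is sound -- the moving endpoint, confined to one block, can never meet the arc's other endpoint, which sits in the other block -- and for $B_s = B_e$ you should argue directly rather than by deformation: an arc $w$ with no endpoint in $B$ either contains $B$ or is disjoint from $B$, hence meets both or neither of two same-type arcs. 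I will note, in fairness, that the paper's own write-up hides exactly this configuration inside ``other cases are analogous'' (its stated claim likewise fails for a tiny arc paired with a wrap-around arc in the same grid cell, e.g.\ $r = [10,12]$ and $r' = [14,13]$, which satisfy $a' > a$ and $b' > b$), so your deformation framing has the virtue of making the problematic event visible -- but as submitted, your argument asserts a false lemma and needs the patch above.
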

\begin{theorem}
\label{thm:CircArcLB}
 There is a circular arc graph $G \in \CA_n$ of symmetric difference at least $\Omega(\sqrt{n})$.
\end{theorem}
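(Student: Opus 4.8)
The plan is to exhibit a single circular arc graph $G$ on $n$ vertices in which \emph{every} pair of vertices already has large symmetric difference; since $\sd(G)=\max_{H\subseteq G}\min_{u,v\in V(H)}\sd_H(u,v)\ge\min_{u,v\in V(G)}\sd_G(u,v)$ by taking $H=G$, this yields $\sd(G)=\Omega(\sqrt n)$. Set $t=\lfloor\sqrt n\rfloor$ and work on a circle of circumference $C$. Fix $t$ equally spaced \emph{clock positions} $\theta_0,\dots,\theta_{t-1}$, and at each $\theta_a$ place $t$ concentric arcs: for $b\in\{1,\dots,t\}$ let $A_{a,b}$ be the arc centered at $\theta_a$ of half-length $w_b=b\cdot\frac{C}{4t}$. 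This gives $t^2$ arcs, each of length at most $C/2$ (so no vertex is universal), and for $n$ not a perfect square a routine padding reaches exactly $n$ vertices.

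Because every arc is at most a half-circle, two arcs meet exactly when the shorter circular distance $D_{a,c}$ between their centers is at most the sum of their half-lengths; that is, $A_{a,b}\sim A_{c,d}$ iff $w_b+w_d\ge D_{a,c}$. Using this rule I would compute $\sd(A_{a,b},A_{c,d})$ by grouping the remaining vertices according to their clock position $e$. For a fixed $e$, the vertex $A_{e,f}$ is adjacent to $A_{a,b}$ iff $w_f\ge T_1:=D_{a,e}-w_b$ and to $A_{c,d}$ iff $w_f\ge T_2:=D_{c,e}-w_d$, so it is adjacent to exactly one of them iff $w_f$ lies between $T_1$ and $T_2$. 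As the $w_f$ are equally spaced with gap $C/4t$, the number of such $f$ is $\approx\min\!\bigl(t,\tfrac{4t}{C}|T_1-T_2|\bigr)$, up to boundary effects as $w_f$ leaves the admissible range $[w_1,w_t]$. Hence, writing $\delta_w=w_b-w_d$,
\[
\sd(A_{a,b},A_{c,d})\ \approx\ \sum_{e=0}^{t-1}\min\!\Bigl(t,\ \tfrac{4t}{C}\,\bigl|(D_{a,e}-D_{c,e})-\delta_w\bigr|\Bigr).
\]

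The argument then splits by which coordinate differs. If $a=c$ (concentric arcs), every summand equals $\min\!\bigl(t,\tfrac{4t}{C}|\delta_w|\bigr)\ge 1$ whenever the threshold window meets $[w_1,w_t]$, and $\Omega(t)$ clock positions do; the point is that the thin annulus separating two nested arcs is "seen" once at each clock position, so even consecutive lengths force $\sd=\Omega(t)$. If $b=d$ (rotated copies), the summand is governed by $|D_{a,e}-D_{c,e}|$, a tent-shaped function of $e$, and a direct estimate again yields $\Omega(t)$.

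The main obstacle is the mixed case $a\ne c$ and $b\ne d$, where the position difference $D_{a,e}-D_{c,e}$ and the length difference $\delta_w$ may \emph{cancel}, making many summands vanish. The key observation I would use is that $g(e):=D_{a,e}-D_{c,e}$ takes the value $+d_0$ on one of the two arcs between the centers and $-d_0$ on the other, where $d_0=D_{a,c}$; cancellation $g(e)\approx\delta_w$ can therefore occur on at most one of these sides, while on the opposite side $|g(e)-\delta_w|\ge d_0\ge C/t$, so those clock positions alone contribute $\Omega(t)$. Combining the three cases shows $\sd(A_{a,b},A_{c,d})=\Omega(t)=\Omega(\sqrt n)$ for all pairs, and hence $\sd(G)=\Omega(\sqrt n)$, matching the upper bound of Theorem~\ref{thm:CircArcUB}.
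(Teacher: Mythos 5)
Your construction (a $\sqrt n\times\sqrt n$ grid of arcs indexed by clock position and length) is genuinely different from the paper's, which places one arc $[(i,j)_d,(j,i)_d]$ per pair of ``digits'' and argues via two lemmas: every integral arc of length about $d$ contains $\Omega(d)$ endpoints of arcs of the family, and of any two arcs one contains a long subarc disjoint from the other. Your strategy is plausible, but as written it has a genuine gap exactly at its crux, the mixed case $a\ne c$, $b\ne d$. The function $g(e)=D_{a,e}-D_{c,e}$ is \emph{not} equal to $\pm d_0$ on the two arcs between the centers: it is linear there, and the two constant plateaus of value $+d_0$ and $-d_0$ are the arcs from one center to the antipode of the other, each of length only $\tfrac{C}{2}-d_0$. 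When the centers are nearly antipodal ($d_0$ close to $C/2$), the non-cancelling plateau contains $O(1)$ clock positions, or none at all, so the claim that ``those clock positions alone contribute $\Omega(t)$'' is false as stated; the argument there must be rerouted (e.g., through the linear transition zones, or by showing a single position whose window $[\min(T_1,T_2),\max(T_1,T_2))$ covers essentially all of $[w_1,w_t]$ and hence contributes $\Omega(t)$ by itself).

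The second, related problem is that the ``boundary effects'' you flag once and then drop are not ignorable even in the non-degenerate case. On the plateau where $|g(e)-\delta_w|\ge d_0$, the per-position count is not $\min\bigl(t,\tfrac{4t}{C}|T_1-T_2|\bigr)$ but the number of grid values $w_f\in[w_1,w_t]=[\tfrac{C}{4t},\tfrac{C}{4}]$ falling in the window; for positions $e$ near the antipodes both thresholds satisfy $T_i\ge D-w_t\ge \tfrac{C}{4}\ge w_t$, so the window lies entirely above the admissible range and the count is $0$. Thus a constant fraction of the plateau can contribute nothing, and you must verify that the surviving portion (positions where $\min(T_1,T_2)\le w_t$ and $\max(T_1,T_2)>w_1$) still has $\Omega(t)$ clock positions -- this is true, but it is a case analysis comparable in length to the two lemmas the paper proves, not a one-line consequence of $|T_1-T_2|\ge C/t$. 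The same caveat applies to your $a=c$ and $b=d$ cases, though there it is easier to discharge. In short: the construction and the cancellation observation are sound and could be pushed through, but the proof as proposed does not yet establish $\sd(A_{a,b},A_{c,d})=\Omega(\sqrt n)$ for all pairs.
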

First, we prove the upper bound, i.e., that every circular arc graph has a symmetric difference at most $O(\sqrt{n})$.
We use the following notations for arcs.
Let $a, b$ be two points of a circle.
Then, an arc $r = [a, b]$ is an arc beginning in $a$ and going in a clockwise direction to $b$.
We call $a$ as the starting point of $r$ and $b$ as the ending point of $a$. 

\begin{proof}[Proof of Theorem~\ref{thm:CircArcUB}]
 Let circular arc graph $G = (V,E)$ be an intersection graph of a set of arcs $R = \{r_1,\dots,r_n\}$ of a circle $C$.
 Without loss of generality, we can suppose that the circumference of $C$ is $2n$, and endpoints of all arcs $r_i \in R$ are integer points and are different for all arcs.
 Thus, each integer point $\{0,\dots, 2n-1\}$ of $C$ is an endpoint of exactly one arc in $R$.
 
 Consider an arc $r = [a,b] \in R$.
 We represent the arc $r$ as a point $(a,b)$ in the plane $\R^2$.
 Note that all these points are in the square $S$ with corners in the points $(0,0)$ and $(2n-1,2n-1)$ (some points may be on the border of $S$).
 We divide the square $S$ into subsquares of size $k \times k$ for $k = \frac{2n-1}{\lfloor\sqrt{n}\rfloor - 1}$. 
 Note that we have strictly less than $n$ such subsquares and $k = \Theta(\sqrt{n})$.
 Thus, there is at least one subsquare that contains two points representing arcs, say $r = [a, b]$ and $r' = [a',b']$.
 It follows that $|a - a'|, |b - b'| \leq k$.
 Suppose that $a' > a$ and $b' > b$, other cases are analogous.
 Then, each arc counted in $\sd(r, r')$ has to start or end in an integer point from the interval $[a,a'-1]$ or $[b, b'-1]$.
 Since there are at most $2k$ integer points in these two intervals and each integer point of $C$ is an endpoint of exactly one arc of $R$, we conclude that $\sd(r, r') \leq 2k \leq O(\sqrt{n})$.
\end{proof}

 Now, we give a construction of a circular arc graph of symmetric difference at least $\Omega(\sqrt{n})$.
 Let $n$ be a square of an integer, i.e., $n = d^2$ for some $d \in \N$.
 We consider a circle $C$ of circumference $n$ and a set $P$ of integer points of $C$, i.e., $P = \{0,\dots,n-1\}$ ordered in clockwise direction.
 The length $|r|$ of an arc $r = [a, b]$ is equal to $b - a \pmod n$.
 We say the arc $[a,b]$ is \emph{integral} if both $a$ and $b$ are integers.

 We will represent each point $p \in P$ as two integer indices $0 \leq i,j < d = \sqrt{n}$ such that $p = i\cdot d + j$.
 Note that each point $p$ has a unique such representation and we denote it as $(i,j)_d$.
 Let $R$ be a set of arcs $\bigl[(i,j)_d, (j,i)_d\bigr]$ for all possible $i \neq j$  such that length of each arc in $R$ is at most $\frac{n}{4}$.
 Note that we require that $i \neq j$, thus we do not consider zero-length arcs consisting only of a point of a form $(i, i)_d$.
 See Figure~\ref{fig:circ-arc-lb-construction} for an illustration.
 Let $G$ be the intersection graph of arcs in $R$.
 We we will prove that $\sd(G) \geq \Omega(\sqrt{n})$.
 First, we prove two auxiliary lemmas, Lemma~\ref{lem:ArcsEndingPoints} and~\ref{lem:LongSubarc}.
 Lemma~\ref{lem:ArcsEndingPoints} asserts that each sufficiently long arc of $C$ contains a lot of starting and ending points of arcs in $R$. 
 Lemma~\ref{lem:LongSubarc} states that for any two arcs $r$ and $r'$, we will find a long arc $s$ that is a subarc of only one of the arcs $r$ and $r'$ (say $r$).
 Thus by Lemma~\ref{lem:ArcsEndingPoints}, the arc $r$ intersects many arcs that go ``away'' from the arc $r'$ and that is enough to imply Theorem~\ref{thm:CircArcLB}.

 \begin{figure}
   \centering
   \includegraphics{./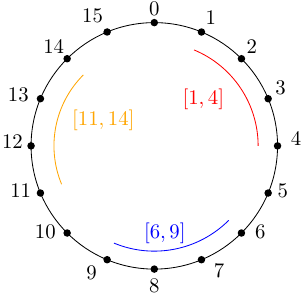}
   \caption{%
     An example of the circular arc graph lower bound construction for~\({n = 16}\) and $d=4$.
     This graph contains three arcs corresponding to points~\({1 = (0, 1)_d}\), \({6 = (1, 2)_d}\), and~\({11 = (2, 3)_d}\).
     For an example, the arc corresponding to point~\({3 = (0, 3)_d}\) is omitted since~\({12 - 3 =
     9 > 4 = \frac{n}{4}}\) where~\({12 = (3, 0)_d}\).
   }
   \label{fig:circ-arc-lb-construction}
 \end{figure}
 
 \begin{lemma}
 \label{lem:ArcsEndingPoints}
  Let $s$ be an integral arc of $C$ of length at least $d - 1$.
  Then, it contains at least $\frac{d}{5}$ integer points such that they are starting points of arcs in $R$.
  Similarly, it contains at least $\frac{d}{5}$ integer points such that they are ending points of arcs in $R$.
 \end{lemma}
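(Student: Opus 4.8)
The plan is to reduce the claim to a counting problem about the residues $j$ appearing in $(i,j)_d$ and then to settle it with a sliding-window (conservation) argument. First I would note that, since the endpoints of $s$ are integers and $|s| \ge d-1$, the arc $s$ contains a block $B$ of $d$ consecutive integer points of $C$. The structural fact driving everything is that the $n$ points of $C$ split into $d$ consecutive blocks of $d$ points, the $i$-th block (``row'') being exactly $\{(i,0)_d,\dots,(i,d-1)_d\} = \{id,\dots,id+d-1\}$. Consequently $B$ is either one full row or a suffix of some row $i$ together with a prefix of row $i+1$; in either case the residues $j$ of the $d$ points of $B$ run through $\{0,1,\dots,d-1\}$ exactly once.

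Next I would turn membership in $R$ into arithmetic. The arc starting at $(i,j)_d$ has length $(j-i)(d-1) \bmod n$, so $(i,j)_d$ is a starting point of an arc of $R$ precisely when $i \ne j$ and $(j-i)(d-1) \bmod n \le n/4$. Writing $M = \lfloor \frac{d^2}{4(d-1)} \rfloor$ and $m = \lceil \frac{3d^2}{4(d-1)} \rceil$, this says that the shift $j-i$ lies in $\{1,\dots,M\}$ (genuinely short forward arcs) or satisfies $j-i \le -m$ (arcs that are short only because they almost wrap all the way around). Thus the set $V_i \subseteq \{0,\dots,d-1\}$ of valid residues in row $i$ is a union of at most two intervals, and since $M = \frac d4 + O(1)$ and $d-m = \frac d4 + O(1)$, one checks that $|V_i| = \frac d4 + O(1)$ for every $i$.

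The heart of the matter, and the step I expect to be the main obstacle, is the uniform lower bound: every length-$d$ window must capture at least $d/5$ valid starting points no matter how it is aligned. Here I would exploit that two consecutive rows have almost identical valid patterns, since $V_{i+1}$ is obtained from $V_i$ by shifting its defining intervals by one, so that $|V_i \sym V_{i+1}| = O(1)$ for every $i$ (including across the wrap-around of the rows). A point of $B$ with residue $j$ is a valid starting point iff $j \in V_i$ when it is carried by row $i$ and iff $j \in V_{i+1}$ when it is carried by row $i+1$; since the residues of $B$ exhaust $\{0,\dots,d-1\}$ exactly once and the two rows agree on all but $O(1)$ residues, the number of valid starting points in $B$ equals $|V_i| \pm O(1) = \frac d4 + O(1)$, independently of the alignment. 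For $d$ large enough this is at least $\frac d5$, which is all the asymptotic bound in Theorem~\ref{thm:CircArcLB} requires. The remaining effort is pure bookkeeping: bounding $|V_i \sym V_{i+1}|$ and $|V_i|$ by tracking the two interval endpoints, the truncations at $j=0$ and $j=d-1$, and the at most two diagonal residues $j=i$ that are excluded.

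Finally, the ending points follow by symmetry. A point $(i,j)_d$ is an ending point of an arc of $R$ exactly when $i \ne j$ and $(i-j)(d-1) \bmod n \le n/4$, which is the starting-point condition with the sign of the shift reversed; the valid residue sets are the reflections of the previous ones and again have size $\frac d4 + O(1)$, so the identical window count yields at least $d/5$ ending points among the integer points of $s$.
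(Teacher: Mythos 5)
Your proposal is correct, and it shares the paper's skeleton: both extract from $s$ a block of $d$ consecutive integer points, note that this block meets each residue class $j \bmod d$ exactly once while spanning at most two consecutive ``rows'' $i$ and $i+1$, and both rest on the same arithmetic fact that the arc starting at $(i,j)_d$ has length $(j-i)(d-1) \bmod n$. Where you diverge is in how the count is finished. The paper never computes the full valid set: it simply takes the cyclic window of residues $k$ with $1 \le (k - j \bmod d) \le \frac{d}{5}$ and verifies in one line that every such point starts an arc of length at most roughly $\frac{d^2}{5} + d \le \frac{n}{4}$, the window being chosen small enough to be valid regardless of which of the two rows carries the point --- so no stability argument is needed at all. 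You instead characterize the exact per-row valid set $V_i$ (two intervals determined by $M$ and $m$), show $|V_i| = \frac{d}{4} + O(1)$ uniformly, and transfer the count to an arbitrary alignment via $|V_i \sym V_{i+1}| = O(1)$. This buys a sharper conclusion (every window contains $\frac{d}{4} + O(1)$ starting points, not just $\frac{d}{5}$) at the price of the bookkeeping you defer. One caution on that bookkeeping: the transition from row $d-1$ to row $0$ is \emph{not} literally a shift of the defining intervals by one --- there the positive window $[1,M]$ replaces the wrapped negative window $[0, d-1-m]$, and the $O(1)$ symmetric difference holds only because of the identity $M + m = d + O(1)$, i.e., because validity depends on $j-i$ essentially only modulo $d$ (lengths for shifts $c$ and $c-d$ differ by exactly $d$). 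That is precisely the fact the paper's cyclic-window formulation exploits implicitly, and it is worth stating explicitly in your write-up; with it, your wrap-around claim is true and the rest is routine, with both proofs equally requiring $d$ sufficiently large.
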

 \begin{proof}
  Since $s$ is integral and its length is at least $d - 1$, it contains at least $d$ consecutive integer points.
  Thus, there is an integer $j$ such that $s$ contains all points of the set $\{(j,t)_d,\dots, (j,d-1)_d, (j+1, 0)_d, \dots, (j+1,t-1)_d\}$ for some $t \in [d]$.
  For any $\ell \in [d]$, we define $j_\ell = j$ if $\ell \ge t$ and $j_\ell = j+1$ otherwise.
  It follows that $s$ contains the point $(j_\ell, \ell)_d$ for every $\ell \in [d]$.
  Consider a subset of these points $S = \left\{\bigl(j_k, k \bigr)_d \mid 1 \le (k - j \mod d ) \le \frac{d}{5} \right\}$.
  Observe that $|S| = \frac{d}{5}$.
  We claim that each point of $S$ is a starting point of an arc in $R$.
  Let $r$ be an arc $\bigl[(j_k,k)_d, (k, j_k)_d\bigr]$ where $1 \le (k - j \mod d) \le \frac{d}{5}$.
  Let $r_0 = \bigl[(j_k,0)_d, (k+1, 0)_d\bigr]$.
  Note that $r$ is a subarc of $r_0$.
\begin{align*}
	|r_0| &= (k + 1) \cdot d - j_k \cdot d \mod n \\
	 &= (k + 1 - j_k \mod d) \cdot d \tag{since $n = d^2$} \\
	 &\leq (k - j \mod d) \cdot d + d \tag{since $j \leq j_k \leq j + 1$} \\
	& \leq \frac{d}{5} \cdot d + d \leq \frac{n}{4} \tag{for sufficiently large $n$}   
\end{align*}
  Thus, $r \in R$.
  Analogously, the set $E = \left\{\bigl(j_k, k \bigr)_d \mid 1 \le (j + 1 - k \mod d ) \le \frac{d}{5} \right\}$ of size $\frac{d}{5}$ contains ending points of arcs in $R$.
 \end{proof}

 \begin{lemma}
 \label{lem:LongSubarc}
  Let $r = [(i,j)_d, (j,i)_d]$ and $r' = [(i',j')_d, (j',i')_d]$ be two arcs in $R$.
  Then, at least one of the arcs $r$ and $r'$ contains an integral subarc $s$ of length $d-2$ that is disjoint from the other arc.
 \end{lemma}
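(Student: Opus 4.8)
The plan is to compare the two arcs $r = [(i,j)_d, (j,i)_d]$ and $r' = [(i',j')_d, (j',i')_d]$ by looking at their four endpoints on the circle $C$ and arguing that the parts of the circle they cover cannot be too similar. The key structural observation is that each arc in $R$ is built from a pair $(i,j)$ with $i \neq j$, its starting point is $(i,j)_d$ and its ending point is the ``reflected'' point $(j,i)_d$; moreover, every arc in $R$ has length at most $\frac{n}{4}$, so no arc covers more than a quarter of the circle. Since $r \neq r'$ (they come from distinct pairs), at least one of the four relevant positions where the boundaries of $r$ and $r'$ differ must create a gap of substantial length that lies inside exactly one of the two arcs.

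Concretely, I would first dispose of the easy case where $r$ and $r'$ are disjoint as arcs of $C$: then the whole of $r$ (of length at least $d-1$, say) is an integral subarc disjoint from $r'$ and we are done. So assume $r$ and $r'$ overlap. Because both arcs have length at most $\frac{n}{4} < \frac{n}{2}$, their overlap is a single subarc, and I can describe the configuration by ordering the four endpoints around $C$. Writing the arcs as intervals $[\alpha, \beta]$ and $[\alpha', \beta']$ in clockwise order (taking care of wraparound using arithmetic mod $n$), the symmetric difference of the two arcs, as point sets on $C$, consists of one or two sub-arcs. I would argue that the total length of this symmetric difference is controlled from below by how far apart the endpoints are, and since the two arcs come from genuinely different index pairs $(i,j)$ and $(i',j')$, the endpoints differ in a way that forces a gap of length at least $d - 2$ to lie entirely inside one of the arcs but outside the other.

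The technical heart is to translate ``the pairs $(i,j)$ and $(i',j')$ are different'' into ``some endpoint of one arc is at distance at least $d-1$ from the corresponding region of the other arc.'' Here I would use the coordinate structure: the starting point $(i,j)_d = i \cdot d + j$ and ending point $(j,i)_d = j \cdot d + i$. If $i \neq i'$, then the starting points $(i,j)_d$ and $(i',j')_d$ differ by at least $d - (d-1) = 1$ in the high-order coordinate, which typically forces a separation of roughly $d$ on the circle; similarly if $j \neq j'$. The plan is to do a short case analysis on which of the coordinates $i, j, i', j'$ coincide, and in each case exhibit an explicit integral subarc of length $d-2$ contained in one arc and disjoint from the other by pointing to the portion of $C$ between an endpoint of one arc and the nearest endpoint of the other.

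The main obstacle I anticipate is the bookkeeping around the circle's wraparound and the reflection map $(i,j) \mapsto (j,i)$: because the starting and ending points are swapped reflections of each other, the relative positions of the four endpoints depend subtly on the signs of $i - j$ and $i' - j'$ and on whether $i$ versus $i'$ or $j$ versus $j'$ is the distinguishing coordinate. Getting a clean, case-uniform lower bound of exactly $d - 2$ (rather than $d$ or $d-1$) suggests that the extremal configurations lose a constant additive amount to rounding, so I would expect the $-2$ to come precisely from the worst alignment of endpoints within a single block of $d$ consecutive integers. Once the disjoint long subarc $s$ is located, Lemma~\ref{lem:ArcsEndingPoints} can then be applied to $s$ to finish the proof of Theorem~\ref{thm:CircArcLB}.
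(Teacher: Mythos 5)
There is a genuine gap: your proposal defers the ``technical heart'' to an unspecified case analysis, and the one quantitative claim you do make to drive that analysis is false. You assert that $i \neq i'$ forces the starting points $(i,j)_d = i\cdot d + j$ and $(i',j')_d = i'\cdot d + j'$ to be separated by ``roughly $d$'' on the circle. This fails because the low-order coordinate can absorb the difference in the high-order one: for $i' = i+1$, $j = d-1$, $j' = 0$, the starting points are consecutive integers, at distance $1$. The same carry phenomenon defeats the analogous claim for the ending points $(j,i)_d$ and $(j',i')_d$. So ``the pairs $(i,j)$ and $(i',j')$ differ'' does not by itself make any single pair of corresponding endpoints far apart, and your plan gives no mechanism for locating the long subarc. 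Relatedly, your fallback of lower-bounding the \emph{total} length of the symmetric difference of $r$ and $r'$ as point sets on $C$ is insufficient: when the arcs overlap, that symmetric difference splits into up to two subarcs, and a priori each piece could have length about $(d-2)/2$, which would not yield the single integral subarc of length $d-2$ that the lemma demands.

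The missing idea, which is exactly what the paper's proof supplies, is a trade-off coming from the reflection $(i,j)\mapsto(j,i)$: the gap between the two ending points and the gap between the two starting points cannot \emph{both} be short. After normalizing so that neither arc contains the point $0$ and the ending point of $r$ lies inside $r'$, the arc $s' = [(j,i)_d,(j',i')_d]$ between the ending points has length $(j'-j)\cdot d + (i'-i)$; if this is less than $d-1$, then either $j'=j$, in which case the arc $[(i,j)_d,(i',j)_d]$ between the starting points has length $(i'-i)\cdot d \ge d$ and lies in $r$ while meeting $r'$ only in its endpoint, or $j'-j=1$ and $i' \le i-2$, in which case the arc $[(i',j')_d,(i,j)_d]$ has length $(i-i')\cdot d - 1 \ge 2d-1$ and lies in $r'$ while meeting $r$ only in its endpoint. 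Because the roles of $i$ and $j$ are swapped at the two ends of each arc, a small end-gap forces a large start-gap --- this is precisely the structure your anticipated ``obstacle'' paragraph points at but does not resolve, and without this computation your case analysis on which coordinates coincide cannot be closed.
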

 \begin{proof}
  If the arcs $r$ and $r'$ are disjoint then the existence of $s$ is trivial as each arc in $R$ has a length at least $d - 1$ (the arcs in $R$ of length exactly $d - 1$ are those of the form $[(k, k+1)_d, (k+1,k)_d]$).
  
  Thus, suppose that $r$ and $r'$ intersect and without loss of generality suppose that the ending point of $r$ lies inside $r'$, i.e. we read in clockwise order the points $(i',j')_d$, $(j,i)_d$ and $(j',i')_d$.
  Consider an arc $s' = [(j,i)_d, (j',i')_d]$.
  Note that arc $s'$ is a subarc of $r'$ and intersects $r$ only in the point $(j,i)_d$.
  Thus, if $|s'| \geq d - 1$, the arc $r'$ would contain the sought integral subarc $s$.
  We have $|s'| = (j' - j)\cdot d + i' - i \pmod n$.
  Since $0 \leq i,i',j,j' \leq d-1$, it holds that if $|s'| \leq d - 2$ then $j = j'$ or $j' - j = 1 \pmod d$ and $i - i' \geq 2$ (note that this holds even when $j = d-1$ and $j' = 0$).

  First, suppose that $i - i' \geq 2$.
  Then, the arc $s'' = [(i'+1,0)_d, (i,0)_d]$ is a subarc of $r'$ and it might intersect the arc $r$ only in the point $(i,j)_d$.
  Since $i'+1 < i$ by the assumption, the length of $s''$ is at least $d$ and the lemma follows for this case.
  
  The last case is when $j' = j$.
  This implies that $i \neq i'$ as all endpoints are unique.
  Moreover, since the point $(j,i)_d$ precedes the point $(j',i')_d$ in clockwise order, we get that  $i < i'$.
  Then, the arc $s'' = [(i,j)_d, (i',j')_d] = [(i,j)_d, (i',j)_d]$ has length at least $d$ and is a subarc $r$.
  Clearly, the arc $s''$ intersects with $r'$ only in the point $(i',j')_d$. 
 \end{proof}
 
 Now, we are ready to prove Theorem~\ref{thm:CircArcLB}.
 \begin{proof}[Proof of Theorem~\ref{thm:CircArcLB}]
  Consider two arcs $r = [(i,j)_d, (j,i)_d]$ and $r' = [(i',j')_d, (j',i')_d]$ in $R$.
  Recall that all arcs in $R$ have length at most $\frac{n}{4}$.
  Thus, we can suppose that the arc $s' = [(j',i')_d, (i,j)_d]$ has length at least $\frac{n}{4}$ and is disjoint from $r$ and $r'$, except for the endpoints $(j',i')_d$ and $(i,j)_d$.
  Note that, this implies that the endpoints $(j,i)_d$ and $(j',i')_d$ of $r$ and $r'$, respectively, are in clockwise order.
  By Lemma~\ref{lem:LongSubarc}, we suppose that $r$ contains a subarc $s$ of length $d - 2$ that is disjoint with $r'$ (the other case when $s$ is a subarc of $r'$ is analogous).
  Let $L$ be a set of points in $s$ such that they are ending points of arcs in $R$ (distinct from $r$).
  By Lemma~\ref{lem:ArcsEndingPoints}, we have that $|L| \geq \frac{d}{5} - 1$.
  
  Let $t = [a, b]$ be an arc with the ending point $b$ in $L$.
  The arc $t$ intersects the arc $r$ as the ending point $b$ is a point of $s \subseteq r$.
  Since the arc $s'$ has length at least $\frac{n}{4}$ and $t \in R$, the starting point $a$ has to be a point of $s'$ and thus, the arc $t$ is disjoint from $r'$.
  Therefore, $\sd(r, r') \geq |L| \geq \Omega(\sqrt{n})$.
 \end{proof}

 \subsection{Interval Graphs}
 In this section, we will prove that the symmetric difference of interval graphs is still a fixed power of $n$ but strictly less than the symmetric difference of circular arc graphs (that is $\Theta(\sqrt{n})$).
 In particular, we will prove the following two theorems.
 
 \begin{theorem}
 \label{thm:IntervalUB}
 Any interval graph $G \in \INT_n$ has symmetric difference at most $O(\sqrt[3]{n})$.
 \end{theorem}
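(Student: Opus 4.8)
The plan is to prove that any interval graph $G \in \INT_n$ has symmetric difference $O(\sqrt[3]{n})$ by finding, in \emph{every} induced subgraph $H$, a pair of vertices whose symmetric difference is small. Since $\sd$ is defined as a max over induced subgraphs of a min over pairs, and the class of interval graphs is closed under taking induced subgraphs, it suffices to show that every interval graph on $m \le n$ vertices contains a pair $u,v$ with $\sd_H(u,v) \le O(\sqrt[3]{m}) \le O(\sqrt[3]{n})$. So I would fix an interval representation of $H$ and look for two intervals whose neighborhoods differ in few vertices.

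The key geometric idea is to imitate the pigeonhole argument from the circular-arc upper bound (Theorem~\ref{thm:CircArcUB}), but to exploit that interval graphs are more rigid than circular-arc graphs, which is what should buy the improvement from $\sqrt{n}$ to $\sqrt[3]{n}$. First I would normalize: assume the $m$ intervals have distinct integer endpoints in $\{0,\dots,2m-1\}$, and represent each interval $[a,b]$ as a point $(a,b)$ in the plane with $a < b$. As in the circular-arc proof, if two intervals $r=[a,b]$ and $r'=[a',b']$ have close endpoints (say $|a-a'|$ and $|b-b'|$ both at most some threshold $\tau$), then every vertex counted in $\sd(r,r')$ must have an endpoint in one of two intervals of total length $O(\tau)$, giving $\sd(r,r') = O(\tau)$. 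A direct grid partition into cells of side $\tau$ forces a collision once $(2m/\tau)^2 < m$, i.e. $\tau \approx \sqrt{m}$, which only recovers the circular-arc bound. To do better I would instead combine two mechanisms: intervals whose endpoints are close give small $\sd$ as above, but I would additionally count \emph{nesting and crossing} structure. The refined strategy is to choose the threshold $\tau \approx \sqrt[3]{m}$ and argue that if no two intervals collide within a $\tau$-window then the left endpoints are spread out, so only $O(m/\tau)$ intervals can start in any window of length $\tau$; a second-order pigeonhole on how many intervals are nested inside a short arc then yields a pair with small symmetric difference. The balance of ``close endpoints'' ($O(\tau)$) against ``number of candidates containing a short region'' ($O(m/\tau^2)$) is what sets the optimum at $\tau = m^{1/3}$.

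Concretely, the cleanest route I would attempt is the following. Sort the intervals by left endpoint and consider consecutive blocks; within a block, intervals start within a window of length $\tau$, so they differ only in their right endpoints and in which short-lived intervals they straddle. Among the intervals whose left endpoints lie in a window of length $\tau$, I would look at their right endpoints: if two of these right endpoints are within $\tau$ of each other then the corresponding pair has both endpoints within $\tau$, hence $\sd = O(\tau)$ and we are done. Otherwise the right endpoints of the intervals starting in that window are $\tau$-separated, which caps the number of such intervals by $O(m/\tau)$ and, crucially, forces them to be deeply nested, so a shortest pair in the nesting chain differs only by the few intervals entirely contained between them, again $O(\tau)$ after the optimization. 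Setting $\tau = \Theta(m^{1/3})$ makes both the ``endpoint gap'' term and the ``enclosed intervals'' term of order $m^{1/3}$ simultaneously, yielding $\sd_H(u,v) = O(\sqrt[3]{m})$.

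I expect the main obstacle to be the precise counting in the ``otherwise'' case: controlling the number of intervals that can cross or be nested inside a short region while simultaneously avoiding any close pair of endpoints. Making the two competing quantities both equal to $m^{1/3}$ requires a careful double-counting (how many short intervals can stab a fixed region versus how many long intervals can have $\tau$-separated endpoints), and the clean statement of the pigeonhole has to be set up so that the failure of one case genuinely forces a favorable instance of the other. This is the step where the interval structure (a linear order with no wrap-around, so containment is a genuine partial order rather than the cyclic tangle of arcs) must be used essentially; for circular arcs the same bookkeeping breaks down and one is stuck at $\sqrt{n}$, which is consistent with the strictly larger lower bound for $\CA$.
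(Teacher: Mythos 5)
Your opening reduction and your target balance (an ``endpoint gap'' term $O(\tau)$ against a ``candidate count'' term $O(m/\tau^2)$, optimized at $\tau=m^{1/3}$) correctly anticipate the shape of the paper's bound, but the concrete mechanism in your third paragraph has a genuine gap and, as stated, is capped at $O(\sqrt{m})$. With distinct integer endpoints in $\{0,\dots,2m-1\}$, a window of geometric length $\tau$ contains at most $\tau+1$ left endpoints, so your nesting chain has at most $s\le\tau+1$ members. Sorting their right endpoints $b_1<\dots<b_s$, the best you can guarantee for some consecutive pair is a gap of length about $2m/s\approx m/\tau$, and for consecutive chain members $r=[a,b]$, $r'=[a',b']$ the set $N(r')\setminus N(r)$ consists (up to the $O(\tau)$ left-end correction) of exactly the intervals starting in $(b,b']$ --- of which there can be as many as the gap length, i.e.\ about $m^{2/3}$ at $\tau=m^{1/3}$. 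So your claim that the shortest pair in the chain ``differs only by the few intervals entirely contained between them, again $O(\tau)$'' is unjustified: to drive that second pigeonhole down to $O(m/\tau^2)$ you would need roughly $\tau^2=m^{2/3}$ candidates in one window, but a length-$\tau$ window holds only $\tau+1$. Optimizing $\tau+m/\tau$ returns you to $\sqrt{m}$, the circular-arc bound, exactly at the step you flagged as the main obstacle.

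The idea you are missing is to abandon geometric proximity altogether and count in \emph{ranks} along two separate orders. The paper sets $A_i=\{j \mid a_i<b_j\}$ and $B_i=\{j \mid a_j<b_i\}$, notes $N(r_i)=A_i\cap B_i$ with both families nested, so $\sd(r_i,r_j)\le|A_i\sym A_j|+|B_i\sym B_j|$ counts the number of \emph{ends} between the two starts plus the number of \emph{starts} between the two ends. Crucially, the ``window'' is chosen adaptively, not geometrically: with $D_\ell=A_\ell\setminus A_{\ell+1}$, take the maximal $k$ with $\sum_{\ell\le k}|D_\ell|\le d+2$, so \emph{any} two of the first $k$ intervals have $A$-difference at most $d+2$ for free, while $k$ itself is not bounded by any window length. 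If $k>d^2$, pigeonholing the $k$ end-positions among the $n$ start-gaps yields a pair with $|B_i\sym B_j|\le n/d^2$; if $k\le d^2$, either two ends share a gap (so $B_i=B_j$) or at least $d$ nonempty gaps sit among at most $d^2$ positions, giving a pair at rank-distance $O(d)$. Taking $d=n^{1/3}$ makes every case $O(n^{1/3})$. This adaptive choice of $k$ --- letting the candidate pool grow to $d^2$ while the $A$-side stays bounded by $d+2$ --- is precisely what your fixed geometric window cannot simulate, and it is also where the interval structure (two global linear orders of starts and ends) is used essentially, consistent with your correct remark that the argument must break for circular arcs.
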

 
 \begin{theorem}
 \label{thm:IntervalLB}
 There is an interval graph $G \in \INT_n$ of symmetric difference at least $\Omega(\sqrt[4]{n})$.
 \end{theorem}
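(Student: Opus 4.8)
The plan is to exhibit one explicit interval graph $G$ on $n$ vertices in which \emph{every} pair of vertices has large symmetric difference; since $\sd(G)=\max_{H\subseteq G}\min_{u,v}\sd_H(u,v)$, it then suffices to take the witness $H=G$. As the parameter is monotone under taking induced subgraphs, it is enough to construct $G$ for a convenient subsequence of sizes, say $n=\Theta(d^4)$, and recover the bound for all $n$ by setting $d=\lfloor n^{1/4}\rfloor$. The first step is to record the interval analogue of the counting principle underlying the circular-arc proof: for two intervals $u=[a,b]$ and $v=[a',b']$, every interval of $R$ whose \emph{right} endpoint lies strictly between $a$ and $a'$, or whose \emph{left} endpoint lies strictly between $b$ and $b'$, intersects exactly one of $u,v$ and hence is counted in $\sd(u,v)$. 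Thus $\sd(u,v)$ is bounded below by the number of interval-endpoints that ``separate'' $u$ from $v$, which reduces the theorem to a purely combinatorial statement about how the endpoints of our family are distributed.

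The construction I would use is a two-scale grid mirroring the circular-arc family $\bigl[(i,j)_d,(j,i)_d\bigr]$ of Theorem~\ref{thm:CircArcLB}, but opened up onto a line: each vertex is an interval whose left and right endpoints encode a pair of indices $(i,j)$, placed so that left and right endpoints are finely interleaved and so that the endpoints are \emph{dense} — between any two endpoint-positions that could separate a pair of our intervals there are $\Omega(d)$ further endpoints of the family. Concretely I would prove two lemmas in the spirit of Lemma~\ref{lem:ArcsEndingPoints} and Lemma~\ref{lem:LongSubarc}: a \emph{density lemma}, stating that every sufficiently long subinterval of the line contains $\Omega(d)$ starting points and $\Omega(d)$ ending points of intervals in $R$; and a \emph{separation lemma}, stating that for any two intervals $u,v$ at least one of them contains a long subinterval disjoint from the other. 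Combining the two, the long private subinterval of $u$ meets $\Omega(d)=\Omega(n^{1/4})$ intervals whose other endpoint escapes into the region missed by $v$, so these intervals meet $u$ but not $v$, giving $\sd(u,v)=\Omega(n^{1/4})$.

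The hard part will be the pairs for which the separation lemma is tight, namely intervals that are \emph{nearly identical} (almost the same endpoints) or \emph{disjoint and short}. Because there is no wrap-around symmetry as in the circular case, the intervals living near the two ends of the line are not interchangeable, and it is precisely these boundary and low-degree vertices that force the weaker exponent $\tfrac14$ rather than $\tfrac12$; the whole point of choosing $n=\Theta(d^4)$ is to pad the family with enough endpoints that even these worst pairs are still separated by $\Omega(d)$ intervals. For the disjoint pairs I would additionally have to control common neighbours: $\sd(u,v)=|N(u)\setminus N(v)|+|N(v)\setminus N(u)|$, so I must ensure that each of $u,v$ retains $\Omega(n^{1/4})$ \emph{private} neighbours after discarding the intervals spanning the gap between them. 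Once the density lemma guarantees a linear-in-$d$ supply of such private neighbours for every interval, the case analysis closes and we obtain an interval graph $G\in\INT_n$ with $\sd(G)\ge\Omega(\sqrt[4]{n})$, proving Theorem~\ref{thm:IntervalLB}.
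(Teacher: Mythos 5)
Your plan works for the ``well-separated'' pairs, and there it indeed parallels the paper: the paper's Observation~\ref{obs:ShortIntervals} plays the role of your density lemma, and its case analysis on length differences plays the role of your separation lemma. The genuine gap is exactly the case you flag and defer: \emph{nearly identical} pairs. Your two lemmas are mutually incompatible there, and no amount of unspecified ``padding'' resolves this. To make the density lemma useful you need endpoints of $R$ at essentially every unit position along the line; but any such family contains pairs like $[p,p+d]$ and $[p+1,p+d+1]$ whose private regions have length $1$ on each side, so neither contains a long private subinterval, the separation lemma is false for them, and counting endpoints in long windows gives nothing --- the only way such a pair can have $\sd\ge d$ is to have $d$ intervals of $R$ anchored at (essentially) a single point. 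This \emph{multiplicity-at-a-point} device is the key idea of the paper's construction that your sketch never articulates: the long intervals come in classes $L_i$ whose members start and end at points $\equiv i \pmod{2d}$ with lengths $\ell_i,\ell_i+2d,\dots,\ell_i+2d^2$, so that every integer point $b'\le t/2$ anchors $d$ long intervals $[b',c]$; for two adjacent short intervals $r,r'$ these $d$ intervals meet $r'$ at $b'$ but miss $r$, which is how the short--short case is closed. Conversely, the separated pairs are handled not by endpoint density but by $d$ short probes \emph{wholly contained} in a private window of length $2d$ (this containment also sidesteps a direction problem your density lemma has: an interval with one endpoint in the private window of $u$ may still stretch back across $v$, e.g.\ for nested pairs). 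The construction is thus a two-scale interlock --- unit-dense short probes certify separated long pairs, anchored long classes certify near-identical short pairs --- and your single density-plus-separation framework, lifted from Theorem~\ref{thm:CircArcLB}, cannot host both roles at once. Note also that your literal starting point, the opened-up grid of intervals encoding pairs $(i,j)$, has only $\Theta(d^2)$ vertices; if all its pairs had symmetric difference $\Omega(d)$ you would get $\sd(\INT_n)=\Omega(\sqrt n)$, contradicting the paper's upper bound of Theorem~\ref{thm:IntervalUB}. So the ``padding'' to $n=\Theta(d^4)$ is not bookkeeping at the boundary, as you suggest; it is where the whole proof lives.

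A smaller slip: your opening counting principle is false for disjoint pairs. If $u=[a,b]$, $v=[a',b']$ with $b<a'$, an interval whose right endpoint lies in $(b,a')$ and whose left endpoint also exceeds $b$ lies wholly in the gap and meets \emph{neither} $u$ nor $v$, yet your principle counts it; the correct statement requires the separating endpoint to lie inside $u\sym v$ restricted to $u\cup v$, i.e.\ inside a private part of one of the intervals. (Your reduction to $n=\Theta(d^4)$ via monotonicity of $\sd$ under induced subgraphs is fine, and matches the paper's implicit treatment.)
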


 Note that the existence of interval graphs of arbitrarily high symmetric difference is proved in Corollary~5.3 of Dallard et al.~\cite{box_graphs}. While they do not provide explicit bounds, their proof gives the same $\Omega(\sqrt[4]{n})$ bound as ours. However, our proof of the lower bound is self-contained and we believe it to be simpler. 
 We start with a proof of the upper bound.
 \begin{proof}[Proof of Theorem~\ref{thm:IntervalUB}]
 Let $G = (V,E)$ be an intersection graph of intervals $R = \{r_1,\dots,r_n\}$ with $r_i = [a_i, b_i]$ and let $V = \{1,\dots,n\}$.
 Without loss of generality, we suppose for clarity that all $a_i$'s and $b_i$'s are different points.
 The intervals are numbered in the order given by their starting points, i.e., for every two indices $i,j \in [n]$ we have $i < j$ if and only if $a_i < a_j$.
 For an interval $r_i$, we define two sets:
 \begin{enumerate}
 \item $A_i = \{j \mid a_i < b_j \}$, i.e., it contains the indices of intervals in $R$ that end after the interval $r_i$ starts.
   \item $B_i = \{j \mid a_j < b_i \}$, i.e., it contains the indices of intervals in $R$ that start before the interval $r_i$ ends.
 \end{enumerate}  
 Note that $N(i) = A_i \cap B_i$.
 Moreover, $\sd(i, j) \leq|N(i)\sym N(j)| \leq |A_i \sym A_j| + |B_i \sym B_j|$.
 Note that for each pair $i,j$, it holds that $A_i \subseteq A_j$ or $A_j \subseteq A_i$, thus $A_i \sym A_j$ is $A_i \setminus A_j$ or $A_j \setminus A_i$ and analogously with $B_i$ and $B_j$.
 We will prove that there are two intervals $r_i$ and $r_j$ such that $|A_i \sym A_j| + |B_i \sym B_j| \leq O(\sqrt[3]{n})$.
 
 Let $d \in \N$ be a parameter.
 We will find two vertices $i,j \in V$ such that $\sd(i,j) \leq O(\max \{d, d + \frac{n}{d^2}\})$.
 Thus, if we set $d = \sqrt[3]{n}$ we would get $\sd(i,j) \leq O(\sqrt[3]{n})$.
 Since any subgraph of an interval graph is again an interval graph, the upper bound for $\sd(G)$ will follow.
 Let $D_\ell = A_\ell \setminus A_{\ell + 1}$.
 In other words, the set $D_\ell$ contains intervals of $R$ that end after the interval $r_\ell$ starts but before the interval $r_{\ell + 1}$ starts.
 Note that $B_i = \{1,\dots,\ell\}$ where $\ell$ is the unique index such that $i \in D_\ell$.
 Let $k$ be the largest index such that $\sum_{\ell \leq k} |D_\ell| \leq d + 2$.
 Note that for any two indices $i,j \leq k$, it holds that $|A_i \sym A_j| \leq \sum_{\ell \leq k} |D_\ell|$.
 
 First, we prove that if $k \leq d^2$, then $\sd(i,j) \leq 2d + 2$.
 Suppose that actually $\sum_{\ell \leq k} |D_\ell| \leq d$.
 Then, $|D_{k + 1}| \geq 3$.
 Let $i,j \in D_{k + 1}$ such that $i,j \leq k$. 
 Then, $B_i = B_j$ and $|A_i \sym A_j| \leq d$, therefore $\sd(i,j) \leq d$.
 
 From now, we suppose that $d \leq \sum_{\ell \leq k} |D_\ell| \leq d + 2$.
 Let $p \leq k$ be an index such that $|D_p| \geq 2$ (if such $p$ exists) and $i,j \in D_p = A_p \setminus A_{p+1}$.
 Thus, $B_i = B_j$.
 Since $i,j \leq p \leq k$, then $|A_i \sym A_j| \leq d+2$ and $\sd(i,j) \leq d+2$.
 Note that so far, we did not use the assumption that $k \leq d^2$.

 Now, suppose that for all $\ell \leq k$ it holds that $|D_\ell| \leq 1$.
 Since $k \leq d^2$ there exists two indices $p < q \leq k$ such that $D_p, D_q \neq \emptyset$ and $q - p \leq d$ (there are at least $d$ indices $\ell \leq k$ such that $|D_\ell| = 1$).
 Let $i \in D_p$ and $j \in D_q$. 
 Since $B_i = \{1, \dots, p\}, B_j = \{1, \dots, q\}$, we have $|B_j \setminus B_i| \leq d$.
 Further, since $i, j \leq k$, we have $|A_i \sym A_j| \leq d+2$.
 Thus, $\sd(i,j) \leq 2d + 2$.

 Now, we suppose that $k > d^2$.
 It follows there are two indices $i,j \leq k$ such that $i \in D_p$ and $j\in D_q$ such that $|p - q| \leq \frac{n}{d^2}$.
 Therefore, $|A_i \sym A_j| \leq d + 2$ and $|B_i \sym B_j| = |p - q| = \frac{n}{d^2}$ and $\sd(i,j) \leq d + 2 + \frac{n}{d^2}$.
  \end{proof}

Now, we give a construction of an interval graph with symmetric difference $\Omega(\sqrt[4]{n})$.
Let $d \in \N$ sufficiently large.
We construct $\Theta(d^4)$ intervals on a line segment $[0, t]$ for $t = 20d^3$ such that the corresponding intersection graph $G$ will have $\sd(G) \geq d$.
There will be intervals of two types -- short and long.
See Figure~\ref{fig:interval-graph-lb-construction} for an illustration.
Short intervals have length $d$ and they start in each point $0,\dots,t-d$, i.e.,
\[
S = \bigl\{[i, i + d] \mid i \in \{0,\dots,t-d\}\bigr\}.
\]
Long intervals will have various lengths.
For $i \geq 0$, let $\ell_i = 4d^2\cdot(i + 1)$.
For $0 \leq i \leq 2d-1$, we define the $i$-th class of long intervals as 
\[
L_i = \bigl\{[a, b] \mid a,b \equiv i \pmod{2d}; \ell_i \leq b - a \leq \ell_i + 2d^2 \bigr\},
\]
i.e., the set $L_i$ contains intervals such that they start and end in points congruent to $i$ modulo~$2d$ and their length is between $\ell_i$ and $\ell_i + 2d^2$.
Let $I = S \cup \bigcup_{0 \leq i \leq 2d-1} L_i$ be the set of all constructed intervals.
We start with two observations about $I$.

\begin{figure}
  \centering
  \includegraphics[width=\textwidth]{./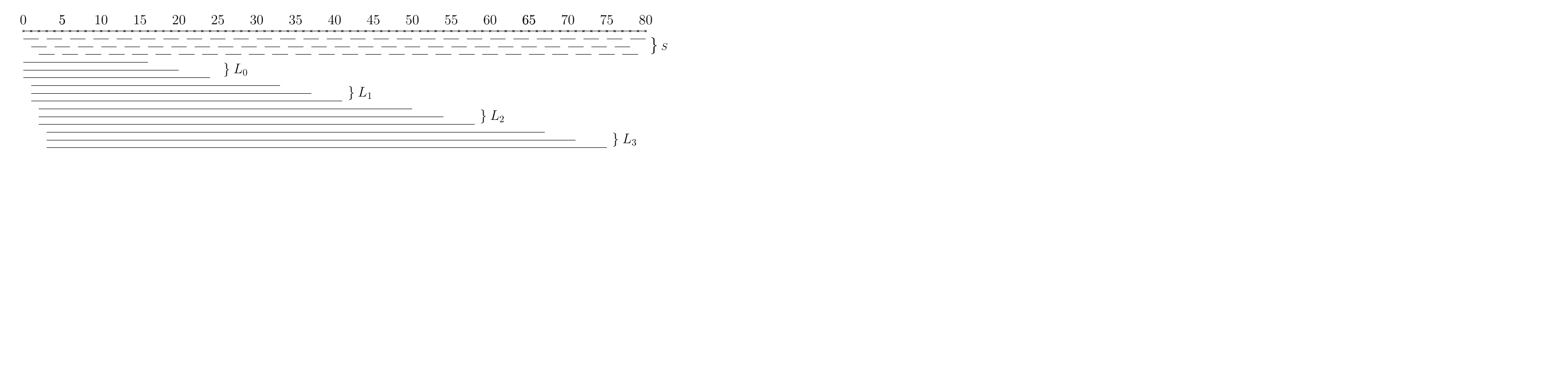}
  \caption{%
    An example of the interval graph lower bound construction for~\(d = 2\).
    For clarity, only the first three intervals are displayed from each set $L_i$. 
  }
  \label{fig:interval-graph-lb-construction}
\end{figure}

\begin{observation}
\label{obs:ShortIntervals}
Any interval $[a,b]$ (for $a,b \in \N$) of the line segment $[0,t]$ of length $2d$ contains $d$ short intervals.
\end{observation}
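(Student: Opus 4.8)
The plan is to prove the observation by directly counting the admissible starting points of short intervals. Recall that a short interval has the form $[i, i+d]$ with integer $i \in \{0, \dots, t-d\}$, and that such an interval is contained in $[a,b]$ exactly when $a \le i$ and $i + d \le b$. Since $[a,b]$ has length $2d$, we have $b = a + 2d$, so the second inequality becomes $i \le b - d = a + d$. Hence the short intervals contained in $[a,b]$ correspond precisely to the integer starting points $i$ lying in the range $a \le i \le a + d$.

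First I would count the integers in this range: there are $d+1$ of them, namely $a, a+1, \dots, a+d$. Then I would verify that each is a legal starting point, i.e.\ that it lies in $\{0,\dots,t-d\}$. This follows from the hypothesis that $[a,b]$ is a subinterval of $[0,t]$ together with $a,b \in \N$: the smallest candidate $a$ satisfies $a \ge 0$, and the largest candidate $a+d$ satisfies $a + d = b - d \le t - d$. Thus all $d+1$ candidates are genuine elements of $S$ contained in $[a,b]$, which in particular yields at least the claimed $d$ short intervals.

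There is essentially no hard step here; the statement is a one-line counting fact. The only point that requires a little care is the boundary bookkeeping — checking that none of the candidate starting points fall outside the admissible range $\{0,\dots,t-d\}$ — which I dispatch using that $[a,b] \subseteq [0,t]$. One may also note that the exact count is $d+1$, so the value $d$ in the statement is a (harmless) lower bound rather than an equality, which is all that the later arguments require.
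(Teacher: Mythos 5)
Your proof is correct and is exactly the counting argument the paper has in mind: the paper states this observation without proof, treating it as immediate, and your enumeration of the $d+1$ admissible starting points $i \in \{a, \dots, a+d\}$ (with the boundary check $a \ge 0$ and $a+d = b-d \le t-d$) is precisely the omitted verification. Nothing to object to; your remark that the true count is $d+1$ and that $d$ suffices for the later applications is also accurate.
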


\begin{observation}
\label{obs:IntervalCount}
There are at most $O(d^4)$ intervals in $I$.
\end{observation}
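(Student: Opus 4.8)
The plan is to count the two kinds of intervals separately and observe that the long intervals dominate the total. First I would dispose of the short intervals: by definition $S = \bigl\{[i, i+d] \mid i \in \{0,\dots,t-d\}\bigr\}$, so $|S| = t - d + 1 = O(d^3)$ because $t = 20d^3$. This contribution is negligible against the target $O(d^4)$, so the real work is in bounding the long intervals.

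The heart of the argument is to bound $|L_i|$ for each fixed class $i$ and then sum over the $2d$ classes. Here I would use two elementary counts. An interval $[a,b] \in L_i$ is determined by its endpoints, both of which must be congruent to $i \pmod{2d}$. The number of admissible starting points $a \in [0,t]$ with $a \equiv i \pmod{2d}$ is at most $t/(2d) + 1 = O(d^2)$. For each such $a$, the endpoint $b$ must also satisfy $b \equiv i \pmod{2d}$, which forces the difference $b - a$ to be a multiple of $2d$; since the length constraint $\ell_i \le b - a \le \ell_i + 2d^2$ confines $b - a$ to an interval of length $2d^2$, there are at most $2d^2/(2d) + 1 = d + 1 = O(d)$ valid choices of $b$. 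Multiplying the two counts gives $|L_i| \le O(d^2)\cdot O(d) = O(d^3)$.

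To finish, I would sum over all classes: $\sum_{0 \le i \le 2d-1} |L_i| \le 2d \cdot O(d^3) = O(d^4)$. Adding the $O(d^3)$ short intervals yields $|I| = |S| + \sum_i |L_i| \le O(d^4)$, which is the claimed bound.

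There is no deep obstacle in this statement; it is a routine counting argument. The only point demanding a moment's care is the per-class bound, where one must exploit the congruence condition on \emph{both} endpoints to conclude that $b-a$ ranges only over multiples of $2d$. Without this observation, a naive count of endpoints inside a window of length $2d^2$ would give $O(d^2)$ choices per starting point and an $O(d^5)$ total, which is too weak; the $2d$-divisibility is exactly what saves the extra factor of $d$ and brings the estimate down to $O(d^4)$.
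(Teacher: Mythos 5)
Your proof is correct and follows essentially the same route as the paper: $|S| = t-d+1 = O(d^3)$, then per class $O(d^2)$ admissible endpoints times $O(d)$ choices forced by the congruence condition (length a multiple of $2d$ inside a window of width $2d^2$), summed over $2d$ classes. Your version even streamlines the paper's count slightly, since as a pure upper bound it avoids the paper's case split between $a \le \frac{t}{2}$ and $b \ge \frac{t}{2}$.
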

\begin{proof}
Clearly, there are $t - d + 1 = O(d^3)$ intervals in $S$.
Note that for any $a \leq \frac{t}{2}, a \equiv i \pmod{2d}$, there are exactly $d + 1$ intervals of a form $[a,b]$ in $L_i$ because of the length constraints of the long interval.
Analogously, for any $b \geq \frac{t}{2}, b \equiv i \pmod{2d}$, there are exactly $d + 1$ intervals of a form $[a,b]$ in $L_i$.
We remark this indeed holds even for $L_{2d-1}$ as we set $t$ to be large enough.
There are no other intervals in $L_i$.
Since there are $O(d^2)$ points $p \in [0,t]$ such that $p \equiv i \pmod{2d}$, it follows that $|L_i| \leq O(d^3)$.
Therefore, there are at most $O(d^4)$ long intervals as there are $O(d)$ classes of long intervals.
\end{proof}

The graph $G$ is an intersection graph of $I$.
Now, we are ready to prove Theorem~\ref{thm:IntervalLB}, i.e., $\sd(G) \geq \Omega(\sqrt[4]{n})$.

\begin{proof}[Proof of Theorem~\ref{thm:IntervalLB}]
Let $r = [a,b]$ and $r' = [a', b']$ be two intervals in~$I$.
Without loss of generality let $a \leq a'$.
First, suppose that the $r,r' \in S$, i.e., both of them are short.
In this case, it holds that $a < a'$ and $b < b'$.
First, if $a' > a + 2d$, then all $d$ short interval of form $[i, i + d]$ for $i \in r$ do not intersect $r'$.
Thus, $|N(r) \sym N(r')| \geq d$.

Now, suppose that $a' \leq a + 2d$. 
Further, suppose that $b' \leq \frac{t}{2}$.
Then as already observed, there are $d$ intervals of the form $[b', c]$ in $L_i$ for $b' \equiv i \pmod{2d}$.
Since $b'$ is not in $r$, we have that $|N(r) \sym N(r')| \geq d$.

If $b' > \frac{t}{2}$, then $a > \frac{t}{2} - 3d$, as $b' - d = a' \leq a + 2d$.
Analogously, it holds there are $d$ intervals of the form $[c, a]$ in $L_i$ for $a \equiv i \pmod{2d}$ as $d$ and $t$ is large enough.

Now, suppose that $r, r' \in L_i$ for some $i$.
In this case $a,b,a',b' \equiv i \pmod{2d}$. 
Since $r \neq r'$, it follows that $|a - a'| \geq 2d$ or $|b - b'| \geq 2d$.
Thus, at least one of the intervals $r$ and $r'$ has a private subinterval of length at least $2d$, and by Observation~\ref{obs:ShortIntervals}, we have that $|N(r) \sym N(r')| \geq d$.

Let $k$ be the difference of length of $r$ and $r'$.
For the remaining cases, we will prove that $k \geq 4d$.
Then, at least one of the intervals $r$ and $r'$ contains a private subinterval of length at least $2d$, and again by Observation~\ref{obs:ShortIntervals}, we have that $|N(r) \sym N(r')| \geq d$.
There are two remaining cases:
\begin{enumerate}
\item $r \in S, r' \in L_i:$ Then, $|r| = d$ and $|r'| \geq 4d^2$.
\item $r \in L_i, r' \in L_j$ for $i < j$: Then, $|r| \leq 4d^2\cdot(i + 1) + 2d^2$ and $|r'| \geq 4d^2\cdot (j + 1)$. It follows that $k = 4d^2\cdot (j - i) - 2d^2 \geq 2d^2$.
\end{enumerate}
Thus, in both cases, we have that $k \geq 4d$ for $d \geq 2$.
We have shown that $|N(r) \sym N(r')| \geq d$ for all cases.
Thus by Observation~\ref{obs:IntervalCount}, we conclude that $\sd(G) \geq \Omega(\sqrt[4]{n})$.
\end{proof}

\section{Open Questions}
We leave two gaps in the bounds provided by this paper.
\begin{enumerate}
 \item $O\bigl(\sqrt{\log n}\bigr)$-gap between the lower and upper bound for the functionality of all graphs as we showed that $\Omega\bigl(\sqrt{n}\bigr) \leq \fun(\mathcal{G}_n) \leq O\bigl(\sqrt{n \log n}\bigr)$ (for $\mathcal{G}_n$ being the class of all graphs on $n$ vertices).
 \item $O\big(\sqrt[12]{n}\bigr)$-gap between the lower and upper bound for the symmetric difference of the interval graphs as we showed that $\Omega\bigl(\sqrt[4]{n}\bigr) \leq \sd(\INT_n) \leq O\bigl(\sqrt[3]{n}\bigr)$.
\end{enumerate}
Thus, two questions arise.
\begin{question}
 What is the true value of the functionality of all graphs on $n$ vertices?
\end{question}

\begin{question}
 What is the true value of the symmetric difference of the interval graphs on $n$ vertices?
\end{question}

The classes of interval graphs and of circular arc graphs are related.
Dallard et al.~\cite{box_graphs} showed that the functionality of the interval graphs is bounded and the symmetric difference of the interval graph is unbounded.
We showed that the symmetric difference of the circular arc graph is unbounded as well, however different from the symmetric difference of the interval graphs.
Thus, a natural question arises.
\begin{question}
 Is the functionality of the circular arc graphs bounded?
\end{question}

\subsubsection*{Acknowledgements}
The research presented in this paper was started during the KAMAK workshop in 2021. We are grateful to the organizers
of this wonderful event.

%

 \bibliographystyle{plain}
 \bibliography{main}

\appendix
\section*{Appendix}
\subsection*{Poisson Approximation of Balls and Bins}
In this section, we prove Theorem~\ref{thm:PoisApx}.
Let us recall the setting.
There are $m$ balls and $n$ bins.
Each ball is thrown into the $i$-th bin with a probability $p_i$, independently of the other throws (and $\sum_{i \in [n]} p_i = 1$).
Let $X_i$ be the number of balls in the $i$-th bin after all balls are thrown.
Let $Y_i$ by a random variable of Poisson distribution such that $\E[Y_i] = p_i m$ and is independent of other variables $Y_j$'s.
We will prove the following theorem.
\begin{theorem}
\label{thm:PoisApxFunc}
Let $f: \R^n \to \R$ be a non-negative function.
Then,
\[
\E[f(X_1,\dots,X_n)] \leq e^{1/12m} \sqrt{2\pi m}\cdot \E[f(Y_1,\dots,Y_n)].
\]
\end{theorem}

Theorem~\ref{thm:PoisApx} follows from the previous theorem when we set the function~$f$ to be the indicator function of the event $A$.
Let $Y = \sum Y_i$.
Since $Y_i$'s are independent, the variable $Y$ has Poisson distribution such that $\E[Y] = \sum \E[Y_i] = m$.
First, we start with an auxiliary lemma.

\begin{lemma}
\label{lem:PoisApx}
The distribution of $(Y_1,\dots,Y_n)$ conditioned on $Y = m$ is the same as $(X_1,\dots,X_n)$.
\end{lemma}
\begin{proof}
Let $m_1,\dots,m_n$ be non-negative integers such that $\sum m_i = m$.
When throwing $m$ balls into $n$ bins we have the following,
\[
\Pr[(X_1,\dots,X_n) = (m_1,\dots,m_n)] = \frac{m!}{\prod_{i \in [n]} m_i!} \cdot \prod_{i \in [n]} p_i^{m_i}.
\]
Now, for the Poisson variables.
Since $Y_i$'s are independent, we have the following.
\begin{align*}
\Pr&\bigl[(Y_1,\dots,Y_n) = (m_1,\dots,m_n) \mid Y = m\bigr] = \frac{\prod \Pr[Y_i = m_i]}{\Pr[Y = m]} \\
&= \frac{\prod e^{-p_i m}(p_i m)^{m_i} / m_i !}{e^{-m}m^m/m!} = \frac{m!}{\prod_{i \in [n]} m_i!} \cdot \prod_{i \in [n]} p_i^{m_i}
\end{align*}
\end{proof}

Further, for the proof of Theorem~\ref{thm:PoisApxFunc}, we use the well-known Stirling's formula (see for example the book by Mitzenmacher and Upfal~\cite{prob_comp_book}).

\begin{lemma}[Stirling's formula]
 \[
  \sqrt{2\pi n} \left(\frac{n}{e}\right)^n \leq n! \leq e^{1/12n} \cdot \sqrt{2\pi n} \left(\frac{n}{e}\right)^n
 \]

\end{lemma}

\begin{proof}[Proof of Theorem~\ref{thm:PoisApxFunc}]
We have the following.
\begin{align*}
\E&\bigl[f(Y_1,\dots,Y_n)\bigr] = \sum_{k = 0}^{\infty} \Pr[Y = k] \cdot \E\bigl[f(Y_1,\dots,Y_n) \mid Y = k \bigr] \\
&\geq \Pr[Y = m] \cdot \E\bigl[f(Y_1,\dots,Y_n) \mid Y = m \bigr] \tag{since $f$ is non-negative} \\
&=\frac{e^{-m}m^m}{m!} \cdot \E\bigl[f(X_1,\dots,X_n) \bigr] \tag{by Lemma~\ref{lem:PoisApx}} \\
&\geq \frac{1}{e^{1/12m}\sqrt{2\pi m}} \cdot \E\bigl[f(X_1,\dots,X_n) \bigr] \tag{by Stirling's formula}
\end{align*}
\end{proof}

\subsection*{Bounds for Binomial Coefficient}
\BinomialBound*

\begin{proof}
Let $k = \lfloor \alpha n \rfloor$.
Using Stirling's formula stated in the previous section, we have the following two inequalities.
\begin{align*}
\binom{n}{k} &\leq  \frac{e^{\frac{1}{12n}} \cdot \sqrt{2\pi n}  \left(\frac{n}{e}\right)^n}{2\pi\sqrt{k(n-k)}\left(\frac{k}{e}\right)^k \left(\frac{n-k}{e}\right)^{n-k}} \tag{by Stirling's formula}\\
&\leq \frac{n^n}{k^k (n-k)^{n-k}} \tag{since $e^{1/12n}\cdot \sqrt{n} < \sqrt{2\pi k(n-k)}$ for all $1 \leq k \leq \frac{n}{2}$ and $n \geq 2$}\\
&=\exp\left(-k \cdot \ln \frac{k}{n} - (n - k) \cdot \ln \frac{n-k}{n}\right) \\
&=  \exp\left(n \cdot\left(-\frac{k}{n} \cdot \ln \frac{k}{n} - \Bigl(1 - \frac{k}{n}\Bigr) \cdot \ln \Bigl(1 - \frac{k}{n}\Bigr) \right)\right) = \exp\left(n \cdot H\left(\frac{k}{n}\right)\right)\\
&\leq e^{H(\alpha) n} \tag{since $H(x)$ is increasing for $0 < x \leq \frac{1}{2}$, and $k \leq \alpha n \leq \frac{n}{2}$}\\
& \\
\binom{n}{k} &\geq \frac{\sqrt{2\pi n}\cdot \left(\frac{n}{e}\right)^n}{e^{\frac{1}{6n}}2\pi\sqrt{k(n-k)}\cdot \left(\frac{k}{e}\right)^k \left(\frac{n-k}{e}\right)^{n-k}} \tag{by Stirling's formula}\\
&\geq \frac{1}{e^{\frac{1}{6}} \cdot \sqrt{2\pi n}} \cdot \frac{n^n}{k^k (n-k)^{n-k}}  \tag{since $n \geq \sqrt{k(n-k)}$ for all $0 \leq k \leq \frac{n}{2}$}\\
&\geq \frac{1}{e^{\frac{1}{6}} \cdot \sqrt{2\pi n}} \cdot \exp\Bigl(n \ln n - \alpha n \ln (\alpha n) - \bigl((1-\alpha) n + 1\bigr) \ln \bigl((1-\alpha) n + 1\bigr)\Bigr)
\tag{since $k^k$ is increasing, $(n-k)^{n-k}$ is decreasing (in $k$) and $\alpha n - 1 \leq k \leq \alpha n$} \\
&\geq \frac{1}{e^{\frac{1}{6}} \cdot \sqrt{2\pi n}} \cdot \exp\Bigl(n \ln n - \alpha n \ln (\alpha n) - \bigl((1-\alpha) n \bigr) \ln \bigl((1-\alpha) n\bigr) - \ln n - 2\Bigr) \tag{by $\ln (x + 1) \leq \frac{1}{x} + \ln x$} \\
&\geq \frac{1}{22\cdot n^{3/2}} \cdot e^{H(\alpha) n} \tag{since $\sqrt{2 \pi} e^{\frac{13}{6}} < 22$}
\end{align*}
\end{proof}

\subsection*{Technical Inequalities}
In this section, we prove two technical inequalities that appear in the proof of Theorem~\ref{thm:RandomUpperBound}.
These inequalities are upper and lower bounds for certain expressions in $p$ for $p \in (0, 0.3]$.
The proofs of both bounds are similar. 
We will show that these expressions are increasing, or decreasing, respectively, and thus the expression is upper- (or lower-) bounded by the value at $p = 0.3$.
We recall that in the following propositions, for $x \in (0,1)$ the function $H(x)$ is defined as
\[
H(x) = -x \cdot \ln x - (1-x) \cdot \ln (1 - x).
\]

\begin{proposition}
\label{prp:TechnicalForLambdaW}
If $p \in (0, 0.3]$, then
\[
\frac{5}{4} - \frac{45p}{40} - \frac{9}{10} \ln p - \frac{H(1.1p)}{p} \leq -\frac{1}{10}.
\]
\end{proposition}
\begin{proof}
Let $f(p) = \frac{5}{4} - \frac{45p}{40} - \frac{9}{10} \ln p - \frac{H(1.1p)}{p}$.
We will show that $f(p)$ is increasing on the interval $(0, 0.3]$.
Since $f(0.3) < -0.11$, it will show the proposition.
To do so, we will show the first derivation of $f$ is positive on the interval $(0,0.3]$.
\begin{equation}
\label{eq:FirstDerivativeF_1}
f'(p) = -\frac{45}{40} - \frac{9}{10p} - \frac{\ln (1 - 1.1p)}{p^2} = - \frac{45p^2 + 36p + 40\ln (1 - 1.1p)}{40p^2}
\end{equation}
Thus, $f'(p) > 0$ if and only if the numerator of (\ref{eq:FirstDerivativeF_1}) is negative.
\begin{align*}
45p^2 + 36p + 40\ln (1 - 1.1p) &\leq 45p^2 + 36p - 44p - 20p^2 \tag{since $\ln(1 - x) \leq -x -\frac{x^2}{2}$ by Taylor expansion of $\ln(1-x)$} \\
&= 25p^2 - 8p < 0 \tag{for $0 < p \leq 0.3$}
\end{align*}
\end{proof}

\begin{proposition}
\label{prp:TechnicalForW}
If $p \in (0, 0.3]$, then
\[
2 - 2(p + \ln p) - \frac{H(1.1p)}{p} \geq 1.
\]
\end{proposition}
\begin{proof}
Let $f(p) = 2 - 2(p + \ln p) - \frac{H(1.1p)}{p}$.
We will show that $f(p)$ is decreasing on the interval $(0, 0.3]$.
Since $f(0.3) > 1.69$, it will show the proposition.
To do so, we will show the first derivation of $f$ is negative on the interval $(0,0.3]$.
\begin{equation}
\label{eq:FirstDerivativeF_2}
f'(p) = -2 - \frac{2}{p} - \frac{\ln (1 - 1.1p)}{p^2} = - \frac{2p^2 + 2p + \ln (1 - 1.1p)}{p^2}
\end{equation}
Thus, $f'(p) < 0$ if and only if the numerator of (\ref{eq:FirstDerivativeF_2}) is positive.
\begin{align*}
2p^2 + 2p + \ln (1 - 1.1p) &\geq 2p^2 + 2p - \frac{33p}{20} \tag{since $1 - x \geq e^{-3x/2}$ for $0 < x \leq \frac{1}{2}$} \\
&= 2p^2 + \frac{7p}{20} > 0
\end{align*}
\end{proof}

\end{document}